\newcommand{\E}{\mathbb{E}}
\newcommand{\cE}{\mathcal{E}}
    \newcommand{\Prb}{\mathbb{P}}
		\newcommand{\cG}{\mathcal{G}}
				\newcommand{\cA}{\mathcal{A}}
			\newcommand{\cM}{\mathcal{M}}
			\newcommand{\fG}{\mathfrak{G}}
\newcommand{\cV}{\mathcal{V}_2}
\newcommand{\cL}{\mathcal{L}}
\newcommand{\cI}{\mathcal{I}}
\newcommand{\cW}{\mathcal{W}}
\newcommand{\vv}{\overrightarrow{v}}
\newcommand{\uu}{\overrightarrow{u}}
\newcommand{\ff}{\smash{f_n}}
		\newcommand{\cH}{\mathcal{H}}
		\newcommand{\cP}{\mathcal{P}}
	\newcommand{\sR}{\mathbb{R}}
	\newcommand{\sN}{\mathbb{N}}
		\newcommand{\sS}{\mathbb{S}}
			\DeclareMathOperator{\disc}{disc}
				\DeclareMathOperator{\card}{card}
					\DeclareMathOperator{\mincut}{Mincut}
		\DeclareMathOperator{\dive}{div}
		\DeclareMathOperator{\flow}{flow}
		\DeclareMathOperator{\cyl}{cyl}
    \newcommand{\sZ}{\mathbb{Z}}
    \newcommand{\sC}{\mathcal{C}}
    \newcommand{\ep}{\varepsilon}    
\declaretheorem[name=Theorem,within=section]{thm}
\declaretheorem[name=Proposition,numberlike=thm]{prop}
\declaretheorem[name=Definition,numberlike=thm]{defn}
\declaretheorem[name=Lemma,numberlike=thm]{lem}
\author{Barbara Dembin \thanks{LPSM UMR 8001, Universit{\'e} Paris Diderot, Sorbonne Paris Cit{\'e}, CNRS, F-75013 Paris, France}}
\newlength{\separationtitre}
\date{}
\begin{document}
\newpage

 \selectlanguage{english}

\title{The maximal flow from a compact convex subset to infinity in first passage percolation on $\sZ^d$ 
}
\maketitle
\paragraph{Abstract:} We consider the standard first passage percolation model on $\sZ^d$ with a distribution $G$ on $\sR^+$ that admits an exponential moment. We study the maximal flow between a compact convex subset $A$ of $\sR ^d$ and infinity. The study of maximal flow is associated with the study of sets of edges of minimal capacity that cut $A$ from infinity. We prove that the rescaled maximal flow between $nA$ and infinity $\phi(nA)/n^{d-1}$ almost surely converges  towards a deterministic constant depending on $A$. This constant corresponds to the capacity of the boundary $\partial A$ of $A$ and is the integral of a deterministic function over $\partial A$. This result was shown in dimension $2$ and conjectured for higher dimensions by Garet in \cite{Garet2}.
\newline

\textit{AMS 2010 subject classifications:} primary 60K35, secondary 82B43.

\textit{Keywords:} First passage percolation, maximal flows.

\section{Introduction}
The model of first passage percolation was first introduced by Hammersley and Welsh \cite{HammersleyWelsh} in 1965 as a model for the spread of a fluid in a porous medium. In this model, mathematicians studied intensively geodesics, \textit{i.e.},  fastest paths between two points in the grid. The study of maximal flows in first passage percolation started later in 1984 in dimension $2$ with an article of Grimmett and Kesten \cite{GrimmettKesten84}. In 1987, Kesten studied maximal flows in dimension $3$ in \cite{Kesten:flows}. The study of maximal flows is associated with the study of random cutsets that can be seen as $(d-1)$-dimensional surfaces. Their study presents more technical difficulties than the study of geodesics. Thus, the interpretation of first passage percolation in terms of maximal flows has been less studied. 

To each edge in the graph $\sZ^d$, we assign a random i.i.d. capacity with distribution $G$ on $\sR^+$ that admits an exponential moment. We interpret this capacity as a rate of flow, \textit{i.e.}, it corresponds to the maximal amount of water that can cross the edge per second. Let us consider a compact convex subset $A$ of $\sR^d$. We interpret the set $A$ as a source of water. We are interested in the maximal amount of water that can flow from the boundary $\partial A$ of $A$ to infinity per second. This issue is in fact analogous to the study of the smallest capacity $\mincut(A,\infty)$ over sets of edges separating $A$ from infinity. This issue was first studied in dimension $2$ by Garet in \cite{Garet2}, he proved that the rescaled maximal flow between $nA$ and infinity $\phi(nA)/n$ almost surely converges towards an integral of a deterministic function $\nu$ over $\partial A$.

Several issues arise when we study this problem in higher dimensions. Garet proves his result in \cite{Garet2} by proving separately upper and lower large deviations. Although the proof of the upper large deviations may be adapted to higher dimensions, the proof of lower large deviations strongly relies on combinatorial estimates that fail in higher dimensions. Moreover, in dimension $2$, the function $\nu$ is actually simpler to study. Thanks to the duality, it is related to the study of geodesics, whereas in higher dimensions, we cannot avoid the study of random surfaces to define the function $\nu$. To get a better understanding of this deterministic function in higher dimensions, we first study the maximal flow in a box. Let us consider a large box in $\sZ^d$ oriented along a given direction $v$. Next, we consider the two opposite sides of the box normal to $v$ that we call top and bottom. We are interested in the maximal flow that can cross the box from its top to its bottom per second.  More precisely, we can ask if this maximal flow properly renormalized converges when the size of the box grows to infinity. This question was addressed in \cite{Kesten:flows},  \cite{Rossignol2010} and \cite{Zhang2017} where one can find laws of large numbers and large deviation estimates for this maximal flow when the dimensions of the box grow to infinity under some moments assumptions on the capacities and on the direction $v$. The maximal flow properly renormalized converges towards the so-called flow constant $\nu(v)$.  In \cite{flowconstant}, Rossignol and Th{\'e}ret proved the same results without any moment assumption on $G$ for any direction $v$. Roughly speaking, the flow constant $\nu(v)$ corresponds to the expected maximal amount of water that can flow per second in the direction of $v$. 
Let us consider a point $x$ in $\partial A$ with its associated normal unit exterior vector $n_A(x)$ and infinitesimal surface $S(x)$ around $x$. When we consider  $nA$, an enlarged version of $A$, the surface $S(x)$ becomes $nS(x)$ and the expected maximal amount of water that can flow in the box of basis $nS(x)$ in the direction $n_A(x)$ is of order $n^{d-1}\nu(n_A(x))C_S$ where $C_S$ is a constant depending on the area of the surface. Heuristically, when we sum over points in $\partial A$, we obtain that the maximal flow  between $nA$ and infinity $\phi(nA)$ is roughly $n^{d-1}$ times the integral of $\nu$ over $\partial A$.  

The aim of this paper is to prove the following theorem that was conjectured by Garet in \cite{Garet2}.
\begin{thm}\label{thm1} Let $d\geq 3$. Let $A$ be a compact convex subset of $\sR^d$. Let $G$ be a probability measure on $[0,+\infty[$ such that $G(\{0\})< 1-p_c(d)$. Let $\nu$ be the flow constant associated to $G$ and let
$$\phi_A=\int_{\partial A}\nu(n_A(x))d\cH^{d-1}(x)\,.$$
For each $\ep>0$, there exist positive constants $C_1$ and $C_2$ depending only on $\ep$ and $G$, such that for all $n\geq 0$,
$$\Prb\left(\,\left| \frac{\mincut(nA,\infty)}{n^{d-1}}-\phi_A\right|\geq \ep\,\right)\leq C_1\exp(-C_2n^{d-1})\,.$$
\end{thm}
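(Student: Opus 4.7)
The plan is to establish Theorem~\ref{thm1} by proving separately the upper and lower large deviations
$$\Prb\!\left(\mincut(nA,\infty)\geq n^{d-1}(\phi_A+\ep)\right),\qquad \Prb\!\left(\mincut(nA,\infty)\leq n^{d-1}(\phi_A-\ep)\right)\leq C_1\exp(-C_2 n^{d-1}),$$
each built on the cylinder flow constant $\nu$ of \cite{flowconstant} together with a polyhedral approximation of the convex body $A$.

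\textbf{Upper bound.} I would first approximate $A$ from the outside by a convex polytope $P\supset A$ whose faces $(F_i)_{1\leq i\leq k}$ with outward normals $(n_i)$ satisfy $\sum_i\cH^{d-1}(F_i)\,\nu(n_i)\leq \phi_A+\ep/3$; this is possible since $\nu$ is bounded and the surface measure is continuous under outer polyhedral approximation of convex bodies. Just outside each $nF_i$ I place a thin slab of height $hn$ normal to $n_i$, and I invoke the upper large deviation estimate for the minimal cut between the two bases of a straight cylinder (cited in the introduction) to obtain, outside an event of probability $\exp(-cn^{d-1})$, a cutset in that slab of capacity at most $n^{d-1}\cH^{d-1}(F_i)\nu(n_i)+\ep n^{d-1}/(3k)$. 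I then glue these $k$ slab-cutsets into a single cutset separating $nA$ from infinity by adding every edge in a $\delta n$-tubular neighborhood of the $(d-2)$-skeleton of $nP$. This neighborhood contains $O(\delta n^{d-1})$ edges, so by the exponential moment of $G$ its total capacity is $O(\delta n^{d-1})$ outside an event of probability $\exp(-cn^{d-1})$; choosing $\delta$ small finishes this half.

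\textbf{Lower bound.} For the matching lower bound, I would approximate $A$ from the inside by $P\subset A$ with $\sum_i\cH^{d-1}(F_i)\nu(n_i)\geq \phi_A-\ep/3$, and attach to each face $F_i$ a thin, pairwise disjoint cylinder $C_i$ of height $hn$ based on $nF_i$ and pushed outward. Given an arbitrary cutset $E$ separating $nA$ from infinity, the main geometric step is to extract from $E\cap C_i$ a genuine cutset of $C_i$ between its two bases: I would add to $E\cap C_i$ a deterministic top \emph{lid} and a thin lateral seal, and then verify that if the resulting set failed to separate bottom from top, a bottom-to-top bypass path could be concatenated with a path from the top of $C_i$ to infinity, contradicting the fact that $E$ separates $nA$ from infinity. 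The lower large deviation estimates for flows in straight cylinders (\cite{Rossignol2010,Zhang2017,flowconstant}) then force the capacity of $E\cap C_i$, and ultimately of $E$ itself through summation over the disjoint cylinders, to exceed $n^{d-1}(\phi_A-\ep)$ with the claimed probability.

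\textbf{Main obstacle.} The decisive difficulty is the surgery in the lower bound: converting an arbitrary cutset $E$ into a cutset of each flat cylinder $C_i$ while paying only $o(n^{d-1}/k)$ in extra capacity. This is precisely where Garet's two-dimensional argument relied on planar duality and a combinatorial count of self-avoiding paths, tools that collapse for $d\geq 3$, as noted in the introduction. I would replace them by an averaging/pigeonhole argument over a one-parameter family of parallel shifts of $C_i$: on most shifts the lateral leakage of $E$ through the sides of $C_i$ has controlled capacity (by a first-moment or Mecke-type averaging against $\cH^{d-1}$), so one can select such a shift on which to seal. Upgrading this averaging to yield an $\exp(-cn^{d-1})$ rate rather than merely an almost-sure statement, uniformly across all cutsets $E$, is the main technical hurdle and is what dictates the exponential moment assumption on $G$.
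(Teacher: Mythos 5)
Your upper bound follows the paper's route (outer polytope, one thin cylinder per face, glue the minimal cutsets of the cylinders by adding deterministic edges near the $(d-2)$-skeleton), and apart from a miscount in the gluing — a solid $\delta n$-tubular neighborhood of the $(d-2)$-skeleton of $nP$ contains of order $\delta^2 n^{d}$ edges, not $O(\delta n^{d-1})$, so one must bridge with a thin $(d-1)$-dimensional shell between the anchored boundaries of adjacent cutsets rather than with a full tube — it is sound. The lower bound, however, has a genuine gap, and it is not the one you identify. The fatal step is the claim that if $E\cap C_i$ (plus a lid and a lateral seal) failed to separate the two bases of $C_i$, one could concatenate a bottom-to-top bypass in $C_i$ with a path from the top of $C_i$ to infinity and contradict the fact that $E$ cuts $nA$ from infinity. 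There is no contradiction: the concatenated path is simply cut by $E$ \emph{outside} $C_i$. More bluntly, for any deterministic bounded region $R\supset nA$ there exist cutsets $E$ from $nA$ to infinity with $E\cap R=\emptyset$ (take $E$ surrounding a huge ball containing $R$), so no argument that inspects $E$ only inside finitely many fixed cylinders near $\partial(nA)$ can bound $V(E)$ from below over all cutsets. Your averaging over parallel shifts of $C_i$ does not repair this, since $E$ may miss every shift.

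What is missing is the control of how far a cheap cutset can wander, and this is exactly what the paper supplies through Zhang's theorem (Theorem \ref{thmZhang}): with probability $1-C_1e^{-C_2 n^{d-1}}$ a \emph{minimal} cutset has at most $\beta n^{d-1}$ edges, hence the discrete region it encloses has perimeter $O(n^{d-1})$ and, after rescaling, lives in a fixed bounded neighborhood of $A$; a pigeonhole over annuli then locates a sphere where the enclosed volume is small and a surgery truncates the configuration there. Even granted this localization, the paper does not reduce to flat cylinders attached to the faces of an inner polytope: it passes to the continuous set $E_n$ of finite perimeter, uses compactness of $\{F:\cP(F,\Omega)\le\beta\}$ in $L^1$ to localize $E_n$ near a deterministic $F$, and covers $\partial^* F$ by small balls in which the boundary is almost flat, applying the cylinder lower-deviation estimates ball by ball (the Cerf--Th\'eret scheme). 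Your plan also silently needs the identification $\inf\{\cI(S):S\supset A\}=\cI(A)$ (Proposition \ref{prp} in the paper) if one argues via enclosing surfaces, though your direct targeting of an inner polytope would sidestep this had the surgery worked. As it stands, the lower-deviation half of your argument cannot be completed without importing the cardinality bound on minimal cutsets or an equivalent localization device.
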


As a corollary, $\mincut(nA,\infty)/n^{d-1}$ converges in probability towards $\phi_A$ when $n$ goes to infinity. Roughly speaking, the rescaled maximal flow that can go from $nA$ to infinity is limited by the capacity of $\partial A$, or equivalently, the rescaled minimal capacity of a cutset between $nA$ and infinity is equal to the capacity of $\partial A$. In addition, we shall prove that there exists a minimal cutset $E$ between the set $nA$ and infinity, \textit{i.e.}, such that the capacity of $E$ is equal to $\phi(nA)$ and $E$ separates $nA$ from infinity. This is far from obvious, but it is a natural consequence of Zhang's result \cite{Zhang2017}. Indeed, for a fixed $n$, there may exist a sequence of sets $(\cE_p)_{p\in \sN}$ of growing size such that $\cE_p\subset\E ^d$ cuts $A$ from infinity and $$\lim_{p\rightarrow \infty} V(\cE_p)=\mincut(nA,\infty)\,.$$ There is no direct argument that allows to extract a sequence from $(\cE_p)_{p\in \sN}$ which would converge to a cutset realizing the minimum. When we consider cutsets in a bounded region, the existence of a cutset achieving the infimum becomes trivial as the number of possible cutsets is finite. 
 We define the edge boundary of $A$ as $$\partial_e A=\big\{\,e=\langle x,y\rangle\in \E^d\,:\, x\in A\cap \sZ^d, \,y\in \sZ^d\setminus A\,\big\}\,.$$
 
\begin{thm}[Existence of a minimal cutset and control of its size]\label{thmZhang} 
Let $A$ be a compact convex subset of $\sR^d$ containing the origin. Let $G$ be a probability measure on $[0,+\infty[$ such that $G(\{0\})< 1-p_c(d)$ and $G$ admits an exponential moment.
\begin{enumerate}
\item With probability 1, there exists a minimal cutset from $A$ to infinity in the original lattice $(\sZ^d,\E^d)$. 
\item There exist constants $\beta_0$, $C_1$, $C_2$ and $\lambda$ depending only on $d$ and $G$ such that for any $\beta>\beta_0$, for any $n\geq \lambda |\partial_e A|$, 
$$\Prb[\text{All the minimal cutsets $E$ are such that $|E|\geq\beta n$}]\leq C_1 \exp(-C_2\beta n)\,.$$
\end{enumerate}
\end{thm}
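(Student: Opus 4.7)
The plan is to follow Zhang's renormalization strategy \cite{Zhang2017}, who proved analogous bounds on the size of minimum cutsets between opposite faces of a box, and adapt it to the infinite-volume setting. Tile $\sZ^d$ into cubes $B_K(k)$ of mesoscopic side length $K$. Call a cube \emph{good} if, for each pair of opposite faces of a slightly enlarged cube, the minimum cutset separating these faces has size at most $C_d K^{d-1}$; by \cite{Zhang2017}, this fails with probability at most $\exp(-cK^{d-1})$.

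For Part (1), $\mincut(A,\infty) < \infty$ almost surely since the edge boundary of any bounded box containing $A$ is a cutset whose capacity is integrable by the exponential moment of $G$. Let $V^* := \mincut(A,\infty)$. For the existence of a minimum cutset, I would use a compactness argument: for each $R$, the quantity $\mincut_R(A,\infty)$ restricted to cutsets contained in the box of side $R$ is a minimum over a finite set, achieved by some $\cE_R^\star$. We argue that almost surely there exists a random $R_\omega < \infty$ for which $\mincut_{R_\omega}(A,\infty) = V^*$: this uses that any cutset between $A$ and the sphere of radius $R$ has capacity at least of order $R^{d-1}$ (by the flow-constant lower bound, valid since $G(\{0\}) < 1 - p_c(d)$), so any cutset with capacity $\leq V^* + 1$ must be contained in the box of side $O((V^* + 1)^{1/(d-1)})$. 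Hence the infimum over all cutsets equals the minimum over cutsets in a fixed bounded region, which is achieved.

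For Part (2), we begin with a minimum cutset $E^*$ (existing by Part 1) and iteratively perform local surgery: in each good cube $B$ that $E^*$ enters densely, we replace $E^* \cap B$ by the local minimum cutset in $B$, which reduces $|E^*|$ without increasing $V(E^*)$. Iterating until termination yields a minimum cutset whose intersection with every good cube has size $\leq C_d K^{d-1}$; its total size is therefore at most $C_d N$, where $N$ is the number of cubes intersected by $E^*$. Since $E^*$ can be assumed to lie in a shell around $\partial A$ of bounded thickness, $N \leq C |\partial_e A| / K^{d-1}$. Choosing $K \sim (\beta n)^{1/(d-1)}$, and using $n \geq \lambda |\partial_e A|$ to ensure $C_d |\partial_e A| \leq \beta n$, a union bound over cubes in the shell (which contributes at most $C (|\partial_e A|/K^{d-1}) \exp(-cK^{d-1})$) gives, with probability at least $1 - C_1 \exp(-C_2 \beta n)$, the existence of a minimum cutset of size $< \beta n$.

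The central difficulty is the local surgery step: replacing $E^* \cap B$ by a local minimum cutset while preserving the global cutset property requires a careful matching of the connectivity of the two surfaces along $\partial B$, which is the technical heart of Zhang's argument in \cite{Zhang2017}. The adaptation to cutsets surrounding a compact convex set, rather than separating opposite faces of a box, adds geometric complications near $\partial A$, where cubes may only partially contain $A$. A secondary obstacle in Part (1) is controlling the spatial extent of near-minimum cutsets via the flow-constant lower bound, which relies crucially on the positive-density percolation hypothesis $G(\{0\}) < 1 - p_c(d)$.
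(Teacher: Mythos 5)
Both parts of your proposal rest on an unproved spatial-confinement claim, and this claim is precisely the hard content of the theorem. In Part (1) you argue that a cutset $E$ with $V(E)\leq V^*+1$ must lie in a box of side $O((V^*+1)^{1/(d-1)})$ because ``any cutset between $A$ and the sphere of radius $R$ has capacity at least of order $R^{d-1}$.'' Two problems. First, a set $E$ separating $A$ from infinity does \emph{not} separate $A$ from $\partial B(0,R)$: the cluster $\widehat{E}$ of vertices joined to $A$ off $E$ may reach far beyond radius $R$, so the flow-constant lower bound for $\mincut(A,\partial B(0,R))$ is not applicable to $E$. Second, the configurations that actually threaten confinement are long thin tentacles of $\widehat{E}$: the edge boundary of a tube of width $1$ reaching to distance $R$ has only of order $R$ edges, not $R^{d-1}$, and when $G(\{0\})>0$ all of these edges can be closed, giving a cutset of unbounded spatial extent whose capacity does not grow at all. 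Ruling this out uniformly over the exponentially many possible shapes of $\widehat{E}$ is exactly the Peierls/renormalization estimate of Zhang, which your proposal invokes only as a black box for ``good cubes'' and never actually brings to bear on the tentacle problem. The same issue sinks Part (2): the sentence ``since $E^*$ can be assumed to lie in a shell around $\partial A$ of bounded thickness, $N\leq C|\partial_e A|/K^{d-1}$'' assumes the conclusion. In addition, the local surgery step (replacing $E^*\cap B$ by a local minimum cutset while preserving the global separation property and not increasing capacity) is flagged by you as the technical heart but not carried out, and it is not in fact how the size bound is obtained.

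For comparison, the paper does not do surgery on a minimal cutset at all. It bounds directly
$\Prb\bigl(\exists E \text{ cutting } A \text{ from infinity with } V(E)\leq V(\partial_e A) \text{ and } |E|=n\bigr)$
by splitting the edges of $E$ into three classes: edges with $t(e)>\ep$ (their number is at most $V(\partial_e A)/\ep$, controlled by the exponential moment for $n\geq\lambda|\partial_e A|$); edges with $0<t(e)\leq\ep$ (controlled by a Peierls count: $\partial_v\widehat{E}$ is a $\sZ^d$-connected set of size at most $3^{d+1}n$ anchored near a fixed vertical line, so there are at most $e^{Cn}$ candidate cutsets, against which a binomial large-deviation bound wins for $\ep$ small); and zero-capacity edges (controlled by the renormalization argument in supercritical percolation, which is where $G(\{0\})<1-p_c(d)$ enters and where tentacles are excluded). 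Existence of a minimizer then follows from this bound by Borel--Cantelli, since almost surely no cutset of capacity at most $V(\partial_e A)$ has more than some finite random number of edges. If you want to pursue your compactness route for Part (1), you would still need essentially this full three-class estimate to justify confinement, so nothing is saved.
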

We prove Theorem \ref{thm1} by proving separately the upper large deviations above the constant $\cI(A)$ in Theorem \ref{ULD} and the lower large deviations below the constant $\phi_A$ in Theorem \ref{LLD}. It will be more convenient in the following to work in the graph $(\sZ^d_n,\E^d_n)$ having for vertices $\sZ^d _n=\sZ^d/n$ and for edges $\E_n^d$, the set of pairs of points of $\sZ^d$ at distance $1/n$ from each other. In this setting, the set $A$ remains fixed and the lattice shrinks. We denote by $\mincut_n(A,\infty)$, the minimal capacity over sets of edges in $\E_n ^d$ separating $A$ from infinity. We define 
$$\cI(A)=\int_{\partial A}\nu(n_A(x))d\cH^{d-1}(x)\,.$$
The quantity $\cI(A)$ may be interpreted as the capacity of $\partial A$.
\begin{thm}[Upper large deviations]\label{ULD}
Let $A$ be a compact convex subset of $\sR^d$ containing the origin. Let $G$ be a probability measure on $[0,+\infty[$ such that $G(\{0\})< 1-p_c(d)$ and $G$ admits an exponential moment. For each $\lambda>\cI(A)$, there exist positive constants $C_1$ and $C_2$ depending only on $\lambda$, $A$ and $G$, such that for all $n\geq 0$,
$$\Prb\left[\mincut_n(A,\infty)\geq \lambda n^{d-1} \right]\leq C_1\exp(-C_2n^{d-1})\,.$$
\end{thm}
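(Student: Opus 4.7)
The plan is to construct, with probability at least $1 - C_1 \exp(-C_2 n^{d-1})$, an explicit cutset separating $A$ from infinity whose total capacity is at most $\lambda n^{d-1}$. The construction rests on three ingredients: a polyhedral approximation of $A$ from outside, the upper large deviation for the min-cut in a straight cylinder, and a Chernoff-type bound exploiting the exponential moment of $G$ to control the gluing.

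First I would fix $\ep > 0$ with $3\ep < \lambda - \cI(A)$. Since $\nu$ is continuous on $\sS^{d-1}$, the classical polyhedral approximation of convex bodies provides a convex polytope $P$ with $A \subset \inte(P)$ and faces $F_1,\dots,F_k$ of outer unit normals $v_1,\dots,v_k$ satisfying
$$\sum_{i=1}^{k} \nu(v_i)\,\cH^{d-1}(F_i) \;\leq\; \cI(A) + \ep.$$
To each face $F_i$ I attach a thin right cylinder $C_i$ of base $F_i$, small fixed height $h$, and axis $v_i$, placed just outside $P$ so that the $C_i$ are pairwise essentially disjoint. Inside each $C_i$, the upper large deviation for the min-cut between the top and bottom bases (a classical cylinder estimate in the spirit of the results underlying the definition of $\nu$ in \cite{flowconstant}) yields a cutset $E_i \subset C_i$ of capacity at most $(\nu(v_i) + \delta)\,\cH^{d-1}(F_i)\,n^{d-1}$ with probability at least $1 - C\exp(-C' n^{d-1})$, where $\delta > 0$ is chosen so that $\delta \sum_i \cH^{d-1}(F_i) \leq \ep$.

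The main obstacle is to assemble the local pieces $E_1,\dots,E_k$ into a single cutset separating $A$ from infinity. Two adjacent cutsets $E_i$ and $E_j$ need not meet along the $(d-2)$-dimensional edge shared by $F_i$ and $F_j$, leaving thin corridors through which water could escape to infinity. I would seal these corridors by adding to the cutset every edge of $\E_n^d$ lying in a tubular neighborhood of fixed radius $\rho$ around the union of all $(d-2)$-dimensional faces of $\partial P$. The number of such edges is $O(\rho\, n^{d-1})$, so for $\rho$ small enough the expected total capacity of the sealing edges is at most $\ep n^{d-1}/2$; a Chernoff bound based on the exponential moment of $G$ then shows that the actual capacity of the sealing edges exceeds $\ep n^{d-1}$ only with probability $C \exp(-c n^{d-1})$. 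A union bound over the $k$ faces and the sealing step produces a cutset of capacity at most $(\cI(A) + 3\ep)\,n^{d-1} < \lambda n^{d-1}$ with the required exponential probability, concluding the proof.
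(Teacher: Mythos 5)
Your strategy is recognisably the paper's: polytope approximation from outside, cylinder cuts on the faces, sealing the gaps, Chernoff bound on the sealing edges. But the gluing step, which is the real crux of the argument, has two gaps that a careful reader would not accept.

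First, you take in each cylinder $C_i$ a minimal cutset between the \emph{top and bottom} bases, i.e.\ a cutset realising the flow $\phi_n(F_i,h)$. Such a cutset is completely free on the lateral boundary of $C_i$: it may run arbitrarily close to the top base, in which case a path from $A$ can enter $C_i$ through the bottom, slide sideways out through the lateral boundary below the cutset, and escape to infinity without ever meeting your surface. Your tubular sealing of radius $\rho$ around the $(d-2)$-skeleton of $P$ plugs the lateral boundary only within distance $\rho$ of the $(d-2)$-faces; the lateral boundary of $C_i$ extends up to height $h$ above $F_i$, so you would need $\rho\geq h$ to close every such corridor. The paper avoids this by working with $\tau_n(F_i+\ep v_i,\ep)$ rather than $\phi_n$: a minimal cutset for $\tau_n$ separates the upper and lower halves of $\partial\cyl(F_i+\ep v_i,\ep)$, and is therefore pinned at the equator $\partial(F_i+\ep v_i)$, so the only gaps to seal are genuinely concentrated near the $(d-2)$-faces.

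Second, the edge count for the sealing is off by a power of $n$. A tubular neighbourhood of \emph{fixed} radius $\rho$ around a $(d-2)$-dimensional set of finite $\cH^{d-2}$-measure $L$ has Lebesgue volume of order $\rho^2 L$, hence contains $\Theta(\rho^2 L\, n^{d})$ edges of $\E_n^d$, not $O(\rho\, n^{d-1})$. With fixed $\rho$ this is $\Theta(n^d)$ edges, whose expected capacity overwhelms $\ep n^{d-1}$; shrinking $\rho$ to order $1/n$ salvages the count but then (as in the first point) the tube no longer covers the escape routes. The paper's sealing region $\cM(i,j)=\cV(F_i\cap F_j,\ep+\zeta)\setminus\cV(A,\ep-\zeta)$ with $\zeta=4d/n$ is deliberately a thin annular shell of radial thickness $O(1/n)$: it reaches out a macroscopic distance $\ep$ to cover the equators of the cylinders, yet occupies a volume $O(\ep/n)$ and so contributes only $O(\ep\, n^{d-1})$ edges. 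Combining the $\tau_n$ pinning with this thin shell is exactly what makes the union of the cylinder cuts and the bridges a genuine cutset of cost at most $(\cI(A)+O(\ep))n^{d-1}$; your version as written produces either a non-cutset or one that is too expensive.
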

The proof of Theorem \ref{ULD} is inspired by the proof of the enhanced large deviations upper bound in \cite{Garet2} and the proof of the upper large deviations for the maximal flow through a domain of $\sR^d$ done in \cite{CerfTheret09geoc}. Roughly speaking, the idea is to build a cutset $E$ from $A$ to infinity whose capacity is close to $\cI(A)n  ^{d-1}$ and next to bound the probability that $\mincut_n(A,\infty)$ is abnormally big, \textit{i.e.}, greater than $\cI(A)n^{d-1}$, by the probability that the capacity of $E$ is abnormally big. To do so, we first approximate $A$ from the outside by a convex polytope $P$. For each face $F$ of $P$ and $v$ its associated exterior unit normal vector, we consider the cylinder $\cyl(F+\ep v,\ep)$ of basis $F+\ep v$ and of height $\ep>0$ and a cutset from the top to the bottom of the cylinder having minimal capacity. We build $E$ by merging the cutsets associated to all the faces of the polytope.  The union of these cutsets is not yet a cutset itself because of the potential holes between these cutsets. We fix this issue by adding extra edges to fill the holes. We next control the number of extra edges we have added. We also need to control the capacity of the cutsets in a cylinder of polyhedral basis to obtain the desired control.

\begin{thm}[Lower large deviations]\label{LLD}
Let $A$ be a compact convex subset of $\sR^d$. Let $G$ be a probability measure on $[0,+\infty[$ such that $G(\{0\})< 1-p_c(d)$ and $G$ admits an exponential moment. We define
$$\phi_A=\inf\big\{\, \cI(S)\,:\, A\subset S \text{ and $S$ is compact}\,\big\}\,.$$ For each $\lambda<\phi_A$, there exist positive constants $C_1$ and $C_2$ depending only on $\lambda$, $A$, and $G$, such that for all $n\geq 0$,
$$\Prb\left[ \mincut_n(A,\infty)\leq \lambda n^{d-1} \right]\leq C_1\exp(-C_2n^{d-1})\,.$$
\end{thm}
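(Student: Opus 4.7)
The plan is to associate to any low-capacity cutset $E$ separating $A$ from infinity a compact set $S_E\supset A$ that it bounds, to show that $V(E)\geq n^{d-1}(\cI(S_E)-\ep)$ with overwhelming probability uniformly in $S_E$, and then to invoke $\cI(S_E)\geq \phi_A$ by definition of the infimum.

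First, Theorem~\ref{thmZhang} provides, outside an event of probability $\leq C_1\exp(-C_2 n^{d-1})$, a minimal cutset $E_n$ from $A$ to infinity in $(\sZ^d_n,\E^d_n)$ of cardinality $|E_n|\leq \beta n^{d-1}$ for some fixed $\beta$. I would let $S_n$ denote the union of $A$ with every bounded connected component of $\sR^d$ deprived of the plaquettes dual to $E_n$: then $A\subset S_n\subset B_R$ for a deterministic radius $R=R(A,\beta,\lambda)$, the perimeter $\cH^{d-1}(\partial S_n)$ is bounded by a deterministic constant $M$, and $\cI(S_n)\geq \phi_A$ holds by definition of $\phi_A$. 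For a \emph{fixed} compact $S\subset B_R$ containing $A$ with $(d-1)$-perimeter at most $M$, the strategy is to cover the reduced boundary $\partial^* S$ by a finite disjoint Vitali family of balls $B(x_i,r_i)$ centred at density points $x_i\in \partial^*S$ at which $\partial S\cap B(x_i,r_i)$ is $\ep r_i$-close to the tangent hyperplane with outward normal $v_i=n_S(x_i)$ (De Giorgi's theorem). Any cutset $E$ with $S_E=S$ must then, inside each ball, separate the top from the bottom of a thin cylinder $\cyl_i$ aligned with $v_i$, so its restriction carries capacity at least the minimum cylinder cut $\tau_n(\cyl_i)$. The cylinder large-deviation estimates of Rossignol--Th\'eret~\cite{flowconstant} and Zhang~\cite{Zhang2017} give
\begin{equation*}
\Prb\bigl[\tau_n(\cyl_i)\leq \nu(v_i)\,\alpha_{d-1}\,r_i^{d-1}\,n^{d-1}(1-\ep)\bigr]\leq C_1\exp\bigl(-C_2 n^{d-1}\bigr)\,,
\end{equation*}
and summing over the Vitali family produces $V(E)\geq n^{d-1}(\cI(S)-O(\ep))$ off an event of probability $\leq C_1'\exp(-C_2' n^{d-1})$.

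The main obstacle will be to promote this shape-by-shape estimate into a bound uniform over the random shape $S_n$. To control the entropy of possible shapes I plan to fix a finite $\ep$-net $\mathcal{S}_\ep$ in $L^1$ distance (on indicator functions) for the family of compact sets $S\subset B_R$ with perimeter at most $M$; this family is compact in $L^1$ by the BV compactness theorem, so $|\mathcal{S}_\ep|$ depends only on $\ep,R,M$ and not on $n$. Since $\cI$ is lower semicontinuous under $L^1$ convergence and the Vitali decomposition is stable under small perturbations of $S$, approximating each realisation of $S_n$ by a nearby element of $\mathcal{S}_\ep$ and taking a union bound over $\mathcal{S}_\ep$ multiplies the probability by a constant only, thereby preserving the rate $\exp(-C_2 n^{d-1})$. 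Combined with $\cI(S_n)\geq \phi_A$, choosing $\ep$ small enough relative to $\phi_A-\lambda$ then closes the proof.
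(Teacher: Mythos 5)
Your proposal runs into a genuine gap at its very first structural step, and it is exactly the difficulty that the paper identifies as the obstacle to a naive adaptation of the Cerf--Th\'eret method.

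You claim that, on the event $|E_n|\leq \beta n^{d-1}$, the enclosed region $S_n$ satisfies $S_n\subset B_R$ for a \emph{deterministic} radius $R=R(A,\beta,\lambda)$. This is false. In the rescaled lattice $(\sZ_n^d,\E_n^d)$ the edges have length $1/n$, so a cutset of cardinality $\beta n^{d-1}$ can reach out to distance of order $n^{d-2}$ from $A$, which grows with $n$ when $d\geq 3$. What the cardinality bound does give, via the isoperimetric inequality, is a uniform bound on the \emph{volume} and \emph{perimeter} of $S_n$, but a connected set of bounded volume and perimeter can still have arbitrarily large diameter (a long thin filament). Consequently the family of shapes you need to discretize by an $\ep$-net does not live in any fixed compact region, and BV compactness — which is formulated on a fixed bounded domain — does not apply. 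The paper is explicit that ``the diameter of $E_n$ might be very large, of polynomial order in $n$, and there is no compact region of $\sR^d$ that almost surely contains $E_n$'', and that this is precisely why one cannot proceed as in \cite{CerfTheret09infc}.

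The missing idea is the geometric surgery the paper performs. One restricts attention to a fixed compact window $\Omega=\overset{o}{B}(0,R)\cap A^c$ and introduces the window capacity $\cI_\Omega$, which only sees the boundary inside $\Omega$ and therefore cannot be directly compared to $\phi_A$. To bridge this, the paper uses the uniform volume bound to locate a unit annulus $\cA_j\subset\Omega$ in which $E_n$ has volume at most $\delta$, slices the set there with a sphere of $\cH^{d-1}$-measure at most $3\delta$, and thereby produces a genuinely compact set $\widetilde{F}$ with $\cI(\widetilde{F})\leq\cI_\Omega(F)+3\delta\nu_{max}$, to which the definition of $\phi_A$ applies. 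Your proposal has no mechanism to replace this step. Once the truncation is in place, your remaining outline — Vitali covering at density points of $\partial^* S$, cylinder lower-deviation estimates, union bound over a finite $L^1$-cover — does match the paper's strategy in spirit, though note that the covering balls and the admissible $\ep_F$ both depend on $F$, so the hand-wavy ``stability under small perturbations of $S$'' has to be encoded, as the paper does, by choosing $\ep_F$ after the covering for each $F$ in the finite net.
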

To study the lower large deviations, we try to proceed as in the work of Cerf and Th{\'e}ret in \cite{CerfTheret09infc}. The idea is to create from a cutset $E\subset \E_n^d$ that cuts $A$ from infinity a continuous subset of $\sR^d$ whose edge boundary (\textit{i.e.}, the edges that have one extremity in the continuous subset and the other one outside) corresponds to the cutset $E$. As we can control the number of edges in a minimal cutset thanks to the work of Zhang \cite{Zhang2017}, we can consider a cutset from $A$ to infinity of minimal capacity and that has at most $cn ^{d-1}$ edges with high probability, for some positive constant $c$. Thanks to this crucial result, the continuous set we build has a perimeter at most $c$. In \cite{CerfTheret09infc}, as the two authors work in a compact region $\Omega$, the continuous object they obtain live in the compact space consisting of all subsets of $\Omega$ of perimeter less than or equal to $c$. In our context, as our cutset $E$ can go potentially very far from $A$, we cannot build from $E$ a continuous set that belongs to some compact space and therefore we cannot use the same method as in \cite{CerfTheret09infc}. However, as the capacity of $E$ is small, we expect it to remain close to the boundary of $\partial A$. We should observe unlikely events just by inspecting what happens near the boundary of $A$. This will enable us to study only the portion of the cutset $E$ near $\partial A$ and to define a continuous version of this portion that belongs to a compact set. Starting from there, we can follow the strategy of \cite{CerfTheret09infc}.

 Finally, we prove in Proposition \ref{prp} that the two constants $\cI(A)$ and $\phi_A$ appearing in Theorems \ref{ULD} and \ref{LLD} are equal. This yields the result stated in Theorem \ref{thm1}.
\begin{prop}\label{prp}
Let $A$ be a compact convex subset of $\sR^d$. The minimal capacity $\phi_A$ for the flow from $A$ to infinity is achieved by $\cI(A)$, the capacity of the boundary of $A$, \textit{i.e.}, 
$$\phi_A=\inf\big\{\, \cI(S),\, A\subset S \text{ and $S$ is compact}\,\big\}=\cI(A)\,.$$
\end{prop}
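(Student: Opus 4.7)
The inequality $\phi_A \le \cI(A)$ is immediate by taking $S = A$ in the infimum. The content of the proposition is the reverse inequality, which I will establish by showing $\cI(A) \le \cI(S)$ for every compact $S \supset A$. My strategy is a continuum duality argument via the divergence theorem, exploiting that the flow constant $\nu$ extends positively homogeneously to a semi-norm $\tau$ on $\sR^d$, with $\tau(v) = |v|\nu(v/|v|)$: the weak triangle inequality for $\nu$ established e.g.\ in \cite{flowconstant} is precisely the sub-additivity of $\tau$. Let $\tau^\circ$ denote its polar, so that $\langle v,w\rangle \le \tau(v)\,\tau^\circ(w)$.

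The main step is to construct a bounded measurable field $\sigma\colon \sR^d \to \sR^d$ with $\tau^\circ(\sigma(x)) \le 1$ for every $x$, trace $\langle \sigma(y), n_A(y)\rangle = \tau(n_A(y))$ at $\cH^{d-1}$-a.e.\ $y \in \partial A$ (taken from the exterior), and $\nabla \cdot \sigma \ge 0$ as a distribution on $\sR^d \setminus A$. When $A$ is a convex polytope, I build $\sigma$ cell by cell on the normal fan of $A$: on the prism over a facet with outer normal $n_F$, take $\sigma$ constant equal to $w_F^* := \nabla\tau(n_F)$, which by Legendre duality satisfies $\tau^\circ(w_F^*) = 1$ and $\langle w_F^*, n_F\rangle = \tau(n_F)$; on the cone over a lower-dimensional face $F'$ of $A$ (in particular a vertex $v$), set $\sigma(x) = \nabla\tau(x - \pi_A(x))$, where $\pi_A$ is the nearest-point projection onto $A$, using the degree-zero homogeneity of $\nabla\tau$. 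These prescriptions glue continuously across shared cell boundaries because $\sigma(x)$ depends only on the direction $(x - \pi_A(x))/|x - \pi_A(x)|$, so the distributional divergence has no singular part on the normal-fan skeleton. Inside each cell, a chain-rule computation shows that $\nabla \cdot \sigma$ equals the trace of the positive semi-definite Hessian $\nabla^2\tau$ restricted to the linear span of the normal cone, and is therefore non-negative. For a general convex $A$, approximate by polytopes $P_k \to A$ in Hausdorff distance; the weak continuity of the surface area measures of convex bodies gives $\cI(P_k) \to \cI(A)$, and the uniform bound $\tau^\circ(\sigma_k) \le 1$ allows extracting a weak-$\ast$ limit.

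Granted such $\sigma$, assume $\cI(S) < \infty$ (otherwise there is nothing to prove) so that $S \setminus A$ has finite perimeter, and apply the divergence theorem:
\[
0 \;\le\; \int_{S \setminus A} \nabla \cdot \sigma \;=\; \int_{\partial S} \langle \sigma, n_S \rangle\, d\cH^{d-1} \;-\; \int_{\partial A} \langle \sigma, n_A \rangle\, d\cH^{d-1}.
\]
The last integral equals $\cI(A)$ by the trace condition on $\sigma$, while the first is bounded by $\int_{\partial S} \tau(n_S)\, d\cH^{d-1} = \cI(S)$ via the pointwise duality $\langle \sigma, n_S\rangle \le \tau^\circ(\sigma)\,\tau(n_S) \le \tau(n_S)$. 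Hence $\cI(A) \le \cI(S)$, and taking the infimum over $S$ yields $\cI(A) \le \phi_A$. The main technical difficulty is the gluing of $\sigma$ across the strata of the normal fan in a way that produces no singular distributional divergence, together with the polytope-to-general-convex limit passage.
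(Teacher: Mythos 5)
Your overall strategy (a continuum duality/calibration argument: exhibit a vector field $\sigma$ with $\tau^\circ(\sigma)\le 1$, nonnegative divergence outside $A$ and exact trace on $\partial A$, then integrate by parts over $S\setminus A$) is a legitimate route to $\cI(A)\le\cI(S)$, but as written it has a genuine gap: every step of your construction on the cones over lower-dimensional faces uses $\nabla\tau$, i.e.\ it presupposes that $\tau$ is $C^1$ away from the origin (equivalently, that the Wulff crystal $\cW_\nu$ is strictly convex). For the first passage percolation flow constant no such regularity is known: $\nu$ is only known to be continuous and to satisfy the weak triangle inequality, so $\tau$ is merely a (possibly non-differentiable) norm. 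Where $\tau$ fails to be differentiable, $\sigma(x)=\nabla\tau(x-\pi_A(x))$ is undefined, and replacing it by an arbitrary measurable selection of the subdifferential destroys both the continuity across the normal-fan skeleton (so a singular, possibly negative, divergence reappears on the interfaces) and the Hessian computation giving $\nabla\cdot\sigma\ge 0$ inside the cells. A second, smaller, problem is the final limit passage: the normal trace of $\sigma_k$ on $\partial A$ is not stable under weak-$\ast$ convergence, so the trace identity $\langle\sigma,n_A\rangle=\tau(n_A)$ is not inherited by the limit field. This second issue is easily repaired (prove the inequality for an inner polytope $P\subset A$ with $\cI(A)\le\cI(P)+\ep$ and use $P\subset S$), but the first one is not.

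It is instructive to compare with the paper's argument, which extracts exactly the part of your construction that needs no regularity. The paper approximates $A$ from inside by a polytope $P=\bigcap_i H_i^-$ and chops $S$ successively, $S_{i+1}=S_i\cap H_{i+1}^-$. For each chop it applies the Gauss--Green theorem on the removed piece $S_i\setminus S_{i+1}$ to a \emph{constant} field $f_{i+1}\equiv y_{i+1}$, where $y_{i+1}\in\cW_\nu$ attains $\sup_{x\in\cW_\nu}x\cdot v_{i+1}=\nu(v_{i+1})$ --- this is precisely your facet vector $w_F^*$, chosen as an element of the exposed face of $\cW_\nu$ rather than as a gradient, which requires only compactness of $\cW_\nu$. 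A constant field has zero divergence and no gluing is needed, so the flux identity directly yields $\nu(v_{i+1})\cH^{d-1}(H_{i+1}\cap S_i)\le\int_{F_i}\nu(n_S)\,d\cH^{d-1}$, hence $\cI(S_{i+1})\le\cI(S_i)$, and after $m$ steps $\cI(P)\le\cI(S)$. In short: your ``prism'' prescription is sound and is the heart of the matter, but the cone construction and the smoothness of $\tau$ it requires must be eliminated, not merely flagged as a technical difficulty.
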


The rest of the paper is organized as follows. In section \ref{s2}, we present the model. In section \ref{s3} and \ref{s4}, we give all the necessary definitions and background. In section \ref{sectionULD}, we prove the upper large deviations Theorem \ref{ULD}. We prove the existence of a minimal cutset Theorem \ref{thmZhang} in section \ref{s6} and  the lower large deviations Theorem \ref{LLD} in section \ref{sectionLLD}. Finally, we conclude the proof of Theorem \ref{thm1} by proving Proposition \ref{prp} in section \ref{sectionconclusion}. 

\section{The model}\label{s2}
\subsection{The environment}
 Let $n\geq 1$ be an integer. We consider the graph $(\sZ^d_n,\E^d_n)$ having for vertices $\sZ^d _n=\sZ^d/n$ and for edges $\E_n^d$, the set of pairs of points of $\sZ^d$ at distance $1/n$ from each other. We use the subscript $n$ to emphasize the dependence on the lattice $(\sZ^d_n,\E^d _n)$. With each edge $e\in\E_n^d $ we associate a random variable $t(e)$ with value in $\sR^+$. The family $(t(e))_{e\in\E_n^d}$ is independent and identically distributed with a common law $G$. Throughout the paper, we work with a distribution $G$ on $\sR^+$ satisfying the following hypothesis.

\noindent{\bf Hypothesis.} The distribution $G$ is such that $G(\{0\})<1-p_c(d)$ and $G$ admits an exponential moment, \textit{i.e.}, there exists $\theta>0$  such that $\int_{\sR^+} \exp(\theta x)dG(x)< \infty$. 

\subsection{Maximal flow}
Let $A$ be a compact convex subset of $\sR^d$. For $x=(x_1,\dots,x_d)$, we define $$\|x\|_2=\sqrt{\sum_{i=1}^dx_i^2}\,.$$  We denote by $\cdot$ the standard scalar product in $\sR^d$. 
A stream is a function $\ff : \E_n^d\rightarrow \sR^d$  such that the vector $\ff(e)$ is colinear with the geometric segment associated with $e$. For $e\in \E_n^d$, $\|\ff(e)\|_2$ represents the amount of water that flows through $e$ per second and $\ff(e)/(n\|\ff(e)\|_2)$ represents the direction in which the water flows through $e$. We say that a stream $\ff$  between $A$ and infinity is admissible if and only if it satisfies the following constraints.
\begin{itemize}
\item[$\cdot$] \textit{The node law} : for every vertex $x$ in $\sZ_n^d\setminus A$, we have
$$\sum_{y\in\sZ_n^d:\,e =\langle x,y \rangle\in \E_n^d } \ff(e)\cdot \overrightarrow{xy} =0\,.$$
\item[$\cdot$] \textit{The capacity constraint}: for every edge $e\in \E_n ^d$, we have 
$$0\leq \|\ff(e)\|_2\leq t(e)\,.$$
\end{itemize}
The node law expresses that there is no loss or creation of fluid outside $A$. The capacity constraint imposes that the amount of water that flows through an edge $e$ per second is limited by its capacity $t(e)$.
As the capacities are random, the set of admissible streams between $A$ and infinity is also random. For each admissible stream $\ff$, we define its flow in the lattice  $(\sZ^d_n,\E^d _n)$ by 
$$\flow(\ff)=\sum_{x\in A\cap \sZ_n^d}\sum_{y\in\sZ^d:\,e =\langle x,y \rangle\in \E_n^d }\ff(e)\cdot \overrightarrow{xy}\,.$$
This corresponds to the amount of water that enters in $\sR^d\setminus A$ through $\partial A$ per second for the stream $\ff$. 
The maximal flow  between $A$ and infinity for the capacities $(t(e))_{e\in\E_n^d}$, denoted by $\phi_n(A\rightarrow \infty)$, is the supremum of the flows of all admissible streams between $A$ and infinity:
$$\phi_n(A\rightarrow \infty)=\sup\left\{\flow(\ff)\,:\,\begin{array}{c}\ff\text{ is an admissible stream between }\\ \text{$A$ and infinity in the lattice  $(\sZ^d_n,\E^d _n)$}\end{array}\right\}\,.$$

\subsection{The max-flow min-cut theorem}
Dealing with admissible streams is not so easy, however we can use an alternative interpretation of the maximal flow which is more convenient. Let $E\subset \E_n^d$ be a set of edges. We say that $E$ separates $A$ from infinity (or is a cutset, for short), if every path from $A$ to infinity goes through an edge in $E$. 
We associate with any set of edges $E$ its capacity $V(E)$ defined by $$V(E)=\sum _{e\in E} t(e)\,.$$ The max-flow min-cut theorem, a classical result of graph theory \cite{Bollobas}, states that 
$$\phi_n(A\rightarrow \infty)=\inf                                                                                              \Big\{\, V(E):\, E \text{ separates $A$ from infinity in $(\sZ_n^d,\E_n^d)$}\, \Big\}\,.$$
We recall that $\mincut_n(A,\infty)$ is the infimum of the  capacities of all cutsets from $A$ to infinity in the lattice $(\sZ_n^d,\E_n^d)$. Note that it is not even obvious whether this infimum is attained. This theorem originally concerns finite graphs but it can be extended to infinite graphs (see for instance section 6.1. in \cite{Garet2}).
We extend the notation $\phi_n$ to any connected subgraph $\cG\subset \sZ_n^d$ and $\fG_1$, $\fG_2$ disjoint subsets of $\cG$:
$$\phi_n(\fG_1\rightarrow \fG_2 \text{ in }\cG)=\inf                                                                                              \Big\{\, V(E):\, E \text{ separates $\fG_1$ from $\fG_2$ in $\cG$}\, \Big\}\,.$$
 
\section{Some notations and useful results}\label{s3}
\subsection{Geometric notations}
Let $S\subset \sR^d$. We define the distance between a point and $S$ by $$\forall x\in\sR^d\,  d_2(x,S)=\inf_{y\in S}\|x-y\|_2$$ and for $r>0$, we define the open $r$-neighborhood $\cV(S,r)$ of $S$ by
$$\cV(S,r)=\Big\{\,x\in\sR^d \,:\, d_2(x,S)< r\,\Big\}\,.$$
Let $x\in \sR^d$, $r>0$ and a unit vector $v$. We denote by $B(x,r)$ the closed ball of radius $r$ centered at $x$, by $\disc(x,r,v)$ the closed disc centered at $x$ of radius $r$ normal to $v$, and by $B^+(x,r,v)$ (respectively $B^- (x,r,v)$) the upper (resp. lower) half part of $B(x,r)$ along the direction of $v$, \textit{i.e.},
$$B^+ (x,r,v)=\Big\{\,y\in B(x,r)\,:\, (y-x)\cdot v\geq 0\,\Big\},$$
and
$$B^- (x,r,v)=\Big\{\,y\in B(x,r)\,:\, (y-x)\cdot v\leq 0\,\Big\}\,.$$ We denote by $\cL ^d$ the $d$-dimensional Lebesgue measure. We denote by $\alpha_d$ the $\cL^d$ measure of a unit ball in $\sR^d$. We denote by $\cH^{d-1}$ the Hausdorff measure of dimension $d-1$. In particular, the $\cH^{d-1}$ measure of a $d-1$ dimensional unit disc in $\sR^d$ is equal to $\alpha_{d-1}$.
 Let $A$ be a non-degenerate hyperrectangle, \textit{i.e.}, a rectangle of dimension $d-1$ in $\sR^d$. Let $\vv$ be one of the two unit vectors normal to $A$. Let $h>0$, we denote by $\cyl(A,h)$ the cylinder of basis $A$ and height $h$ defined by 
$$\cyl(A,h)=\Big\{\,x+t\vv\, : \,  x\in A,\, t\in[-h,h]\,\Big\}\,.$$
The dependence on $\vv$ is implicit in the notation $\cyl(A,h)$.
We also define the infinite cylinder of basis $A$ in a direction $\uu$ (not necessarily normal to $A$):
$$\cyl(A,\uu,\infty)=\Big\{\,x+t\uu\, : \,  x\in A,\, t\geq 0\,\Big\}\,.$$
Note that these definitions of cylinder may be extended in the case where $A$ is a set of linear dimension $d-1$, \textit{i.e.}, $A$ is included in an hyperplane of $\sR^d$, which is the affine span of $A$.
\subsection{Sets of finite perimeter and surface energy}
The perimeter of a Borel set $S$ of $\sR^d$ in an open set $O$ is defined as 
$$\cP(S,O)=\sup \left\{\int _S \dive f(x)d\cL^d(x):\, f\in C^\infty_c(O,B(0,1))\right\}\, $$
where $C^\infty_c(O,B(0,1))$ is the set of the functions of class $\sC^\infty$ from $\sR^d$ to $B(0,1)$ having a compact support included in $O$, and $\dive$ is the usual divergence operator. The perimeter $\cP(S)$ of $S$ is defined as $\cP(S,\sR^d)$. The topological boundary of $S$ is denoted by $\partial S$.  The reduced boundary $\partial ^* S$ of $S$ is a subset of $\partial S$ such that, at each point $x$ of $ \partial ^* S$, it is possible to define a normal vector $n_S(x)$ to $S$ in a measure-theoretic sense, and moreover $\cP(S)=\cH^{d-1}(\partial^*S)$. 
We denote by $\nu$ the flow constant that is a function from the unit sphere $\sS^{d-1}$ of $\sR ^d$ to $\sR ^+$ as defined in \cite{flowconstant}. We  denote by $\nu_{max}$ and $\nu_{min}$ its maximal and minimal values on the sphere. The flow constant $\nu(v)$ corresponds to the expected maximal amount of water that can flow per second in the direction of $v$. A more rigorous definition will be given later.
We can define the associated Wulff crystal $\cW_\nu$:
$$\cW_\nu=\Big\{\,x\in\sR^d\,:\, \forall y, \,\, y\cdot x\leq \nu(y)\,\Big\}.$$ 
With the help of the Wulff crystal, we can define the surface energy of a general set.
\begin{defn} The surface energy $\cI(S,O)$ of a Borel set $S$ of $\sR^d$ in an open set $O$ is defined as 
$$\cI(S,O)=\sup \left\{\int _S \dive f(x)d\cL^d(x):\, f\in C^1_c(O,\cW_\nu)\right\} .$$
\end{defn}
\noindent We will note simply $\cI(S)=\cI(S,\sR^d)$.
\begin{prop}[Proposition 14.3 in \cite{Cerf:StFlour}] The surface energy $\cI(S,O)$ of a Borel set $S$ of $\sR^d$ of finite perimeter in an open set $O$ is equal to
$$\cI(S,O)=\int _{\partial^* S \cap O} \nu(n_S(x))d\cH^{d-1}(x)\,.$$
\end{prop}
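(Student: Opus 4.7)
The plan is to establish the two inequalities separately. The upper bound $\cI(S,O)\leq \int_{\partial^*S\cap O}\nu(n_S(x))d\cH^{d-1}(x)$ is a direct consequence of the generalized Gauss--Green formula for sets of finite perimeter together with the defining inequality of the Wulff crystal. The reverse inequality requires constructing near-optimal admissible vector fields and relies on the convexity of the flow constant $\nu$ (a known property for first passage percolation), which ensures that $\nu$ coincides with the support function of $\cW_\nu$.

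For the upper bound, fix any $f\in C^1_c(O,\cW_\nu)$. Since $S$ has finite perimeter and $f$ has compact support in $O$, the generalized Gauss--Green formula gives
$$\int_S \dive f(x)\, d\cL^d(x)=\int_{\partial^*S\cap O} f(x)\cdot n_S(x)\, d\cH^{d-1}(x).$$
By the definition of $\cW_\nu$, the vector $f(x)\in\cW_\nu$ satisfies $f(x)\cdot n_S(x)\leq \nu(n_S(x))$ for $\cH^{d-1}$-almost every $x\in\partial^*S$. Integrating this pointwise bound and taking the supremum over $f$ yields the upper bound.

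For the reverse inequality, I would first use that $\nu$ extended by positive $1$-homogeneity is convex, so for every $v\in\sS^{d-1}$ there exists $\xi(v)\in\cW_\nu$ with $\xi(v)\cdot v=\nu(v)$; by compactness of $\cW_\nu$ the selection $v\mapsto\xi(v)$ can be chosen Borel. By De~Giorgi's structure theorem, $\partial^*S\cap O$ is countably $(d-1)$-rectifiable and admits an approximate tangent hyperplane at $\cH^{d-1}$-a.e.\ point. Given $\ep,\delta>0$, I cover $\cH^{d-1}$-almost all of $\partial^*S\cap O$ by finitely many pairwise disjoint compact pieces $K_i$ lying inside pairwise disjoint thin open slabs $T_i\Subset O$, each orthogonal to a unit vector $v_i$ and of thickness much smaller than their width, with $\cH^{d-1}(\{x\in K_i:|n_S(x)-v_i|>\delta\})$ arbitrarily small. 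Choosing smooth cut-offs $\chi_i\in C^\infty_c(T_i,[0,1])$ equal to $1$ near $K_i$ with $\sum_i\chi_i\leq 1$, the vector field $f=\sum_i\chi_i\,\xi(v_i)$ takes values in $\cW_\nu$ by convexity of $\cW_\nu$, and applying Gauss--Green slab by slab shows that $\int_S\dive f\,d\cL^d$ is close to $\sum_i\xi(v_i)\cdot v_i\,\cH^{d-1}(K_i)=\sum_i\nu(v_i)\cH^{d-1}(K_i)$, which in turn approximates $\int_{\partial^*S\cap O}\nu(n_S)d\cH^{d-1}$ by uniform continuity of $\nu$ on $\sS^{d-1}$.

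The main obstacle will be to simultaneously control three error sources in this construction: the spurious flux of $f$ through the top and bottom faces of the slabs $T_i$ (handled by making the slabs very thin so that the Gauss--Green flux essentially detects only $K_i$), the discrepancy between $n_S$ and the piecewise-constant approximation $v_i$ (controlled by $\delta$ together with the uniform continuity of $\nu$, which follows from convexity and finiteness), and keeping $f(x)\in\cW_\nu$ at every point $x\in O$ (ensured by the constraint $\sum_i\chi_i\leq 1$ and the convexity of $\cW_\nu$, after possibly adding $(1-\sum_i\chi_i)$ times a fixed point of $\cW_\nu$ to maintain $C^1$ regularity). Once these three approximations are quantified, refining the cover and sending $\delta,\ep\to 0$ closes the gap and yields the matching lower bound.
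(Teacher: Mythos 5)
The paper does not prove this statement at all: it is imported verbatim as Proposition 14.3 of Cerf's St-Flour notes \cite{Cerf:StFlour}, so you are supplying a proof where the paper only gives a citation. Your argument is essentially the standard proof of that cited result, and its two halves are sound: the inequality $\cI(S,O)\leq\int_{\partial^*S\cap O}\nu(n_S)\,d\cH^{d-1}$ is exactly Gauss--Green plus the definition of $\cW_\nu$, and the reverse inequality is the usual construction of near-optimal fields from a finite disjoint family of almost-flat pieces of the reduced boundary, which is the same covering technology the paper itself invokes elsewhere (Lemma 7.2, quoted from Cerf--Th\'eret). You also correctly isolate the one non-formal input, namely that the homogeneous extension of $\nu$ is convex, hence is the support function of $\cW_\nu$, so that for each unit $v$ some $\xi(v)\in\cW_\nu$ attains $\xi(v)\cdot v=\nu(v)$; this is Proposition 14.1 of \cite{Cerf:StFlour}, which the paper uses in the proof of Proposition 1.5, and without it the identity would only hold with the convex envelope of $\nu$. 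Three small inaccuracies to fix: (i) since each cut-off $\chi_i$ vanishes near $\partial T_i$, there is no ``flux through the top and bottom faces of the slabs'' to control --- the genuine error term is the contribution of $\partial^*S\cap T_i\setminus K_i$, where $\xi(v_i)\cdot n_S$ may be negative; it is bounded by $\sup_{y\in\cW_\nu}\|y\|_2$ times the total leftover measure $\cH^{d-1}\bigl((\partial^*S\cap O)\setminus\bigcup_i K_i\bigr)$, which your covering makes small, so the proof survives but the bookkeeping should be stated this way; (ii) the parenthetical of adding $(1-\sum_i\chi_i)$ times a fixed point of $\cW_\nu$ destroys the compact support of $f$ unless that point is the origin --- what you actually need is only $0\in\cW_\nu$ (true since $\nu\geq 0$) together with convexity, so that $\chi_i\xi(v_i)\in\cW_\nu$; (iii) the Borel selection $v\mapsto\xi(v)$ is superfluous, as only finitely many directions $v_i$ ever enter the construction.
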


We recall the two following fundamental results.
\begin{prop}[Isoperimetric inequality]\label{isop}
There exist two positive constants $b_{iso}$, $c_{iso}$ which depend only on the dimension $d$, such that, for any Cacciopoli set $E$, any ball $B(x,r)\subset \sR^d$,
$$\min\left(\cL^d(E\cap B(x,r)),\cL^d((\sR^d\setminus E)\cap B(x,r))\right)\leq b_{iso}\cP(E,\overset{o}{B}(x,r))^{d/d-1},$$
$$\min\left(\cL^d(E),\cL^d(\sR^d\setminus E)\right)\leq c_{iso} \cP(E)^{d/d-1}\,.$$
\end{prop}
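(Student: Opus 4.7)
The plan is to derive both inequalities from the Gagliardo-Nirenberg-Sobolev embedding extended to $BV$ functions, applied to the indicator $\mathbf{1}_E$. The starting tool I would establish (by mollification and the coarea formula, approximating in the strict $BV$ sense by $C^\infty_c$ functions) is the global $BV$-Sobolev inequality: there is a constant $c_d$ depending only on $d$ such that for every $u\in BV(\sR^d)$ with $\cL^d(\{u\neq 0\})<\infty$,
$$\left(\int_{\sR^d}|u|^{d/(d-1)}\,d\cL^d\right)^{(d-1)/d}\leq c_d\,\|Du\|(\sR^d).$$
A parallel \emph{relative} Sobolev-Poincaré inequality on a ball will carry the second part: for every $u\in BV(B(x,r))$,
$$\inf_{c\in\sR}\left(\int_{B(x,r)}|u-c|^{d/(d-1)}\,d\cL^d\right)^{(d-1)/d}\leq c_d'\,\|Du\|(\overset{o}{B}(x,r)).$$
Both sides scale as $r^{d-1}$, so the constant $c_d'$ is dimension-only; the standard route is to extend $u$ across $\partial B$ by reflection (the sphere is smooth, so the extension preserves $BV$ with controlled total variation), multiply by a cutoff, and then invoke the global $BV$-Sobolev inequality on the extension.

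For the global bound in Proposition~\ref{isop}, I would first argue that if $\cP(E)<\infty$ and both $\cL^d(E)$ and $\cL^d(\sR^d\setminus E)$ are infinite, there is nothing to prove after reducing to the case where one of them is finite (a consequence of the inequality itself applied to truncations, e.g.\ $E\cap B(0,R)$, and letting $R\to\infty$). Assuming $\cL^d(E)<\infty$ (else work with $\sR^d\setminus E$, whose perimeter is identical), I would apply the global $BV$-Sobolev inequality to $u=\mathbf{1}_E$, using $\|D\mathbf{1}_E\|(\sR^d)=\cP(E)$ and $\|u\|_{L^{d/(d-1)}}=\cL^d(E)^{(d-1)/d}$, then raise to the power $d/(d-1)$ to get $\cL^d(E)\leq c_d^{d/(d-1)}\cP(E)^{d/(d-1)}$, setting $c_{iso}=c_d^{d/(d-1)}$.

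For the local bound, I would apply the relative Sobolev-Poincaré to $u=\mathbf{1}_E$ restricted to $B=B(x,r)$. Writing $\alpha=\cL^d(E\cap B)$ and $\beta=\cL^d(B\setminus E)$, a one-variable calculus argument shows that the infimum over $c\in\sR$ of $(1-c)^{d/(d-1)}\alpha+c^{d/(d-1)}\beta$ is attained at some $c^*\in[0,1]$; assuming without loss of generality that $\alpha\leq\beta$, the optimizer satisfies $c^*\leq 1/2$, hence $(1-c^*)^{d/(d-1)}\geq 2^{-d/(d-1)}$ and
$$\int_B|\mathbf{1}_E-c^*|^{d/(d-1)}\,d\cL^d\geq 2^{-d/(d-1)}\min(\alpha,\beta).$$
Combining with the relative Sobolev-Poincaré estimate and raising to the power $d/(d-1)$ gives $\min(\alpha,\beta)\leq b_{iso}\,\cP(E,\overset{o}{B}(x,r))^{d/(d-1)}$ with $b_{iso}=2\,(c_d')^{d/(d-1)}$.

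The genuinely technical step is the relative Sobolev-Poincaré inequality with a dimension-only constant: the approximation of $BV$ functions on $B$ by smooth functions up to the boundary, together with the reflection extension across $\partial B$, requires a little care to ensure that no extra perimeter contribution appears from $\partial B$ itself and that the extension constant is universal. Once these functional inequalities are in place, the geometric deduction for indicators of Caccioppoli sets is immediate. The global case is strictly easier because no boundary issue arises, and the choice of optimal constant $c^*$ is replaced by the trivial choice $c=0$.
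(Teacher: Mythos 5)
The paper does not prove this proposition at all: it is recalled as a classical fact from geometric measure theory (it is the standard relative and global isoperimetric inequality for sets of finite perimeter, as in Evans--Gariepy or Ambrosio--Fusco--Pallara), so there is no in-paper argument to compare yours with. Your route is the standard textbook derivation and is sound: the $BV$ Gagliardo--Nirenberg--Sobolev inequality applied to $\mathbf{1}_E$ gives the global bound when $\cL^d(E)<\infty$, and the Sobolev--Poincar\'e inequality on a ball (with a dimension-only constant, by scaling invariance, since both sides scale like $r^{d-1}$) combined with your optimization over the constant $c$ gives the relative bound; the small slip $b_{iso}=2(c_d')^{d/(d-1)}$ versus $2^{d/(d-1)}(c_d')^{d/(d-1)}$ is immaterial.

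The one step I would not let stand as written is the reduction ruling out $\cL^d(E)=\cL^d(\sR^d\setminus E)=\infty$ via truncations $E\cap B(0,R)$: truncating creates the extra boundary term $\cH^{d-1}(\partial B(0,R)\cap \partial^*(E\cap B(0,R)))$, of order $R^{d-1}$, so the global inequality applied to $E\cap B(0,R)$ only yields $\cL^d(E\cap B(0,R))\lesssim R^{d}$, which is vacuous in the limit. The clean fix is already in your toolkit: apply the \emph{relative} inequality on $B(0,R)$, which gives $\min\left(\cL^d(E\cap B(0,R)),\cL^d(B(0,R)\setminus E)\right)\leq b_{iso}\,\cP(E)^{d/(d-1)}$ uniformly in $R$; if both $\cL^d(E)$ and $\cL^d(\sR^d\setminus E)$ were infinite, the left side would tend to infinity as $R\to\infty$, a contradiction. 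In fact this same limit yields the global inequality directly with $c_{iso}=b_{iso}$, so the global $BV$--Sobolev step is optional. Apart from that, the technical point you flag (the extension across $\partial B$ with a universal constant, or any standard proof of the Poincar\'e inequality on balls) is indeed where the analytic content sits, and handling it as you describe is legitimate.
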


\begin{thm}[Gauss-Green theorem]\label{GGthm}
For any compactly supported $\sC^1$ vector field $f$ from $\sR^d$ to $\sR^d$, any Caccioppoli set $E$,
$$\int_E \dive f(x)d\cL^d(x)=\int_{\partial^* E}f(x)\cdot n_E(x) d\cH ^{d-1}(x)\,.$$
\end{thm}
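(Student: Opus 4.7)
The plan is to deduce the Gauss-Green formula from the general structure theory of sets of finite perimeter (Caccioppoli sets), following the standard route via De Giorgi's structure theorem. The idea is that for a Caccioppoli set $E$, the distributional gradient of the indicator function $\ind_E$ is a vector-valued Radon measure of finite total variation, and the statement to prove is essentially the definition of this measure, combined with an explicit identification of its polar decomposition.

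First I would fix a compactly supported $\sC^1$ vector field $f : \sR^d \to \sR^d$, and recall that, by definition of the perimeter and by the Riesz representation theorem, there exists a finite $\sR^d$-valued Radon measure $D\ind_E$ on $\sR^d$ such that, for every such $f$,
$$\int_E \dive f(x) \, d\cL^d(x) = -\int_{\sR^d} f(x) \cdot d D\ind_E(x),$$
and its total variation measure $|D\ind_E|$ satisfies $|D\ind_E|(\sR^d) = \cP(E)$. This is nothing but the statement that $\cP(E)<\infty$ means that $\ind_E$ has bounded variation in $\sR^d$. The first equality is the integration-by-parts formula for BV functions, which is obtained by approximating $\ind_E$ by smooth convolutions $\ind_E * \rho_\epsilon$ (where classical Gauss-Green applies) and passing to the limit using the lower semicontinuity of the total variation and the definition of $\cP(E)$ as a supremum.

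Next I would apply the Radon-Nikodym theorem to write $d D\ind_E = -\sigma_E \, d|D\ind_E|$ for a Borel measurable map $\sigma_E : \sR^d \to \sS^{d-1}$, defined $|D\ind_E|$-almost everywhere (the sign convention is chosen so that $\sigma_E$ plays the role of an outer normal). The real work is then to identify $\sigma_E$ as the measure-theoretic outer normal $n_E$ and the measure $|D\ind_E|$ as $\cH^{d-1}\llcorner \partial^* E$. This is the content of De Giorgi's structure theorem: for $|D\ind_E|$-almost every $x$, one performs a blow-up analysis of $E$ at $x$, shows that rescaled copies of $E$ converge in $L^1_{\text{loc}}$ to the half-space $\{y : (y-x)\cdot \sigma_E(x) \leq 0\}$, and uses a density/covering argument to conclude that the set of such points is $(d-1)$-rectifiable with $\cH^{d-1}$-density equal to $1$. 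The definition of $\partial^* E$ is precisely this set of good blow-up points, and $n_E(x) = \sigma_E(x)$ by construction.

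The main obstacle is clearly this last step: De Giorgi's structure theorem, which requires the delicate rectifiability argument and the blow-up analysis that identifies the tangent measure as a hyperplane measure. Once this is granted, the conclusion follows by substitution: combining the integration-by-parts formula with the polar decomposition and the identification of $|D\ind_E|$ gives
$$\int_E \dive f(x) \, d\cL^d(x) = -\int_{\sR^d} f(x) \cdot d D\ind_E(x) = \int_{\partial^* E} f(x)\cdot n_E(x) \, d\cH^{d-1}(x),$$
which is the desired Gauss-Green formula. Since this theorem is classical and well-documented (for instance in Cerf's Saint-Flour lecture notes cited just above in the excerpt, or in the standard references of Evans-Gariepy or Ambrosio-Fusco-Pallara), I would in practice cite it rather than reprove De Giorgi's structure theorem in detail.
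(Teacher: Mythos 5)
Your outline is correct, and it matches the paper's treatment: the paper does not prove Theorem \ref{GGthm} at all, but simply recalls it as a classical fact from the theory of sets of finite perimeter (as in \cite{Cerf:StFlour}), whose standard justification is exactly the route you describe — BV integration by parts, polar decomposition of $D\ind_E$, and De Giorgi's structure theorem identifying $|D\ind_E|$ with $\cH^{d-1}\llcorner\partial^*E$ and the polar vector with $n_E$. Citing the structure theorem rather than reproving it, as you propose, is precisely what the paper does.
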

\subsection{Approximation by convex polytopes}
We recall here an important result, which allows to approximate adequately a set of finite perimeter by a convex polytope.
\begin{defn}[Convex polytope] We say that a subset $P$ of $\sR ^d$ is a convex polytope if there exist $v_1,\dots,v_m$ unit vectors and $\varphi_1,\dots,\varphi_m$ real numbers such that
$$P=\bigcap_{1\leq i\leq m}\Big\{\,x\in \sR^d  : \, x\cdot v_i\leq \varphi_i\,\Big\}\,.$$
We denote by $F_i$ the face of $P$ associated with $v_i$, \textit{i.e.}, $$F_i= P\cap\Big\{\,x\in \sR^d  : \, x\cdot v_i= \varphi_i\,\Big\}\,.$$
\end{defn}
Any compact convex subset of $\sR^d$ can be approximated from the outside and from the inside by a convex polytope with almost the same surface energy.
\begin{lem}\label{ApproP} Let $A$ be a bounded convex set in $\sR ^d$. For each $\ep>0$, there exist convex polytopes $P$ and $Q$ such that $P\subset A\subset Q$ and $\cI(Q)-\ep\leq \cI(A)\leq \cI(P)+\ep$.  
\end{lem}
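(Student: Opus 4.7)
The plan is to approximate $A$ from the outside and from the inside by convex polytopes in Hausdorff distance, and then transfer that convergence to the surface energies via the continuity of the flow constant $\nu$ on the unit sphere. Recall that since $\nu$ is the support function of $\cW_\nu$, the functional $\cI$ on convex bodies is (up to a combinatorial factor) a mixed volume with $\cW_\nu$, so $P\subset A\subset Q$ automatically forces $\cI(P)\leq\cI(A)\leq\cI(Q)$ by monotonicity; the content of the lemma is therefore a continuity statement.

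Assume first that $A$ has nonempty interior. Fix $\delta>0$ and pick a finite $\delta$-net $x_1,\dots,x_m$ of $\partial A$. At each $x_i$, select a supporting hyperplane to $A$ with outward unit normal $v_i$, and set
\[
Q_\delta=\bigcap_{i=1}^m\Big\{\,x\in\sR^d:\,x\cdot v_i\leq x_i\cdot v_i\,\Big\},\qquad P_\delta=\hull(x_1,\dots,x_m).
\]
These are convex polytopes satisfying $P_\delta\subset A\subset Q_\delta$, and it is standard (every convex body being the intersection of all its supporting half-spaces at boundary points) that $d_H(P_\delta,A)\to 0$ and $d_H(Q_\delta,A)\to 0$ as $\delta\to 0$.

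The analytic input is the classical weak continuity of the surface area measure of convex bodies under Hausdorff convergence (see e.g.\ Schneider, \emph{Convex bodies: the Brunn--Minkowski theory}, Thm.\ 4.2.1): if $K_k\to K$ in Hausdorff distance, then the surface area measures $S(K_k,\cdot)$, defined as the push-forward of $\cH^{d-1}|_{\partial K_k}$ under the Gauss map, converge weakly on $\sS^{d-1}$ to $S(K,\cdot)$. For a convex polytope this is simply $S(P,\cdot)=\sum_i\cH^{d-1}(F_i)\delta_{v_i}$, so
\[
\cI(K)=\int_{\sS^{d-1}}\nu(v)\,dS(K,dv).
\]
Since $\nu$ is continuous on $\sS^{d-1}$ (a classical property of the flow constant, cf.\ \cite{flowconstant}), we conclude $\cI(P_\delta)\to\cI(A)$ and $\cI(Q_\delta)\to\cI(A)$. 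Choosing $\delta$ small enough yields both inequalities of the lemma.

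The main obstacle is the invocation of the weak continuity of the surface area measure; it is a classical but nontrivial fact from convex geometry, for which only a citation is needed rather than a new proof. A minor subtlety is the degenerate case where $A$ has empty interior and therefore lies in an affine hyperplane: in that case $\cI(A)=0$ in the distributional sense of the paper, and one may take $P$ to be a single point of $A$ and $Q$ a very thin prism around $A$ of appropriate orientation, whose surface energy can be made smaller than $\ep$ by choosing the thickness small.
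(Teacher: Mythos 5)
Your route is genuinely different from the paper's and is viable in the main case. The paper obtains the inner polytope exactly as you do (convex hull of finitely many boundary points converging to $A$), but then it only needs the lower semicontinuity of $\cI$ under $L^1$ convergence of sets of finite perimeter to get the one-sided bound $\cI(A)\leq\cI(P)+\ep$; for the outer polytope it simply cites Cerf--Pisztora. You instead stay entirely inside convex geometry: Hausdorff approximation, weak continuity of surface area measures, and continuity of $\nu$. This buys a stronger conclusion ($\cI(P_\delta)\to\cI(A)$ and $\cI(Q_\delta)\to\cI(A)$, not just a liminf inequality) and treats both approximations symmetrically, at the price of importing a nontrivial theorem of Brunn--Minkowski theory, whereas the paper's argument reuses the soft BV lower semicontinuity it needs elsewhere anyway.

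Two points need repair. First, the outer polytope: the fact you invoke (a convex body is the intersection of \emph{all} its supporting half-spaces) does not by itself give Hausdorff convergence of your $Q_\delta$, because at a net point $x_i$ the supporting hyperplane need not be unique and you allow an arbitrary choice of $v_i$; a priori the chosen normals could avoid a whole cone of directions (at singular boundary points the normal cone can be large). One must either argue that net points near regular boundary points are forced to pick normals close to the unique normals there, and that the supporting half-spaces at regular points already intersect in $A$, or — more simply — use the standard circumscribed construction $Q_\delta=\bigcap_i\{x\in\sR^d: x\cdot u_i\leq h_A(u_i)\}$, where $h_A(u)=\sup_{x\in A}x\cdot u$ and $(u_i)$ is a $\delta$-net of $\sS^{d-1}$, for which Hausdorff convergence is immediate. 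Second, the degenerate case: if $A$ has empty interior and affine dimension exactly $d-1$, a thin \emph{full-dimensional} prism $Q$ around $A$ satisfies $\cI(Q)\geq 2\nu_{min}\cH^{d-1}(A)$, which does not tend to $0$ with the thickness, so your proposed $Q$ fails there. The fix is to take $Q$ itself lower-dimensional, e.g.\ a $(d-1)$-dimensional simplex containing $A$ inside the affine hull of $A$; then $\cL^d(Q)=0$, $\partial^*Q=\emptyset$, and $\cI(Q)=0=\cI(A)$.
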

\begin{proof}
Let $A$ be a bounded convex set in $\sR^d$. Let $\ep>0$. Let $(x_k)_{k\geq 1}$ be a dense family in $\partial A$. For $n\geq 1$, we define $P_n$ as the convex hull of $x_1,\dots,x_n$, \textit{i.e.}, the smallest convex set that contains the points $x_1,\dots,x_n$. 
As $A$ is convex, we have $P_n\subset A$ and $P_n$ converges towards $A$ when $n$ goes to infinity for the $\cL ^1$ topology.
The functional $\cI$ is lower semi-continuous, thus
$$\cI(A)\leq\liminf_{n\rightarrow \infty}\cI(P_n)\,,$$
so there exists $n$ large enough such that 
$$\cI(A)\leq \cI(P_n)+\ep\,$$
and we take $P=P_n$.
The existence of $Q$ was shown by Cerf and Pisztora in Lemma 5.1. in \cite{cerf2000} for the Wulff shape. The proof may be easily adapted to a general convex bounded set $A$. 
\end{proof}

\section{Background on maximal flow}\label{s4}

We now consider two specific maximal flows through a cylinder for first passage percolation on $\sZ^d_n$ where the law of capacities is given by the distribution $G$. We are interested in the cutsets in a cylinder. 
Let us first define the maximal flow from the top to the bottom of a cylinder.
Let $A$ be a non-degenerate hyperrectangle and let $\vv$ be one of the two unit vectors normal to $A$. Let $h\geq 0$. In order to define the flow from the top to the bottom, we have to define discretized versions of the bottom $B_n(A,h)$ and the top $T_n(A,h)$ of the cylinder $\cyl(A,h)$ in the lattice $(\sZ_n^d ,\E_n^d)$. We define 
$$B_n(A,h):= \left\{x\in\sZ_n^d\cap\cyl(A,h)\,:\,\begin{array}{c}
\exists y \notin \cyl(A,h),\, \langle x,y \rangle\in\E_n^d \\\text{ and $\langle x,y \rangle$ intersects } A -h\vv
\end{array} \right\}$$
and
$$T_n(A,h):= \left\{x\in\sZ_n^d\cap\cyl(A,h)\,:\,\begin{array}{c}
\exists y \notin \cyl(A,h),\, \langle x,y \rangle\in\E_n^d \\\text{ and $\langle x,y \rangle$ intersects } A+h\vv
\end{array} \right\}\,.$$
We denote by $\phi_n(A,h)$ the maximal flow from the top to the bottom of the cylinder $\cyl(A,h)$ in the direction $\vv$ in the lattice $(\sZ_n^d,\E_n^d)$, defined by 
$$\phi_n(A,h)=\phi_n\Big(\,T_n(A,h)\rightarrow B_n(A,h) \text{ in } \cyl(A,h)\,\Big)\,.$$
This definition of the flow is not well suited to subadditive arguments, because we cannot glue together two cutsets from the top to the bottom of two adjacent cylinders in order to get a cutset from the top to the bottom of the union of these two cylinders. The reason is that the trace of a cutset from the top to the bottom of a cylinder on the boundary of the cylinder is totally free. We go around this problem by introducing another flow through the cylinder which is genuinely subadditive. The set $\cyl(A,h)\setminus A$ has two connected components, denoted by $C_1(A,h)$ and $C_2(A,h)$. We  define discretized versions of the boundaries of these two sets in the lattice $(\sZ_n ^d,\E_n^d)$. For $i=1,\,2$, we define
$$C'_{i,n}(A,h)=\Big\{\,x\in\sZ_n^d\cap C_i(A,h)\,:\,\exists y\notin \cyl(A,h),\,\langle x,y\rangle\in\E_n^d\,\Big\}\,.$$
We call informally $C'_{i,n}(A,h)$, $i=1,2$, the upper and lower half part of the boundary of $\cyl(A,h)$.
We denote by $\tau_n(A,h)$ the maximal flow from the upper half part to the lower half part of the boundary of the cylinder, \textit{i.e.}, 
$$\tau_n(A,h)=\phi_n\Big(\,C'_{1,n}(A,h)\rightarrow C'_{2,n}(A,h)\text{ in } \cyl(A,h)\,\Big)\,.$$
By the max-flow min-cut theorem, the maximal flow $\tau_n(A,h)$ is equal to the minimal capacity of a set of edges $E\subset \E_n^d$ that cuts $C'_{1,n}(A,h)$ from $C'_{2,n}(A,h)$ inside the cylinder $\cyl(A,h)$. We will need the following upper large deviation result. 
\begin{thm}[Upper large deviations of the maximal flow in a cylinder]\label{upperlargedeviationcyl}
Let $G$ be a probability measure on $[0,+\infty[$ such that $G(\{0\})< 1-p_c(d)$ and $G$ admits an exponential moment, \textit{i.e.}, there exists $\theta>0$  such that $\int_{\sR^+} \exp(\theta x)dG(x)< \infty$. For every unit vector $v$, for every non-degenerate hyperrectangle $A$ normal to $v$, for every  $h>0$ and for every  $\lambda>\nu(v)$, there exist positive real numbers $C_1$ and $C_2$ depending only on $\lambda$ and $G$, such that, for all $n\geq 0$,
$$\Prb\left[ \phi_{n}(A,h)\geq \lambda \cH^{d-1}(A)n^{d-1}\right]\leq C_1\exp(-C_2h n^{d})\,.$$
\end{thm}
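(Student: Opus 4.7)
The plan is to use a slicing argument along the direction $v$, in the spirit of Rossignol--Th{\'e}ret \cite{Rossignol2010,flowconstant}, where variants of this statement are already established. The first step will be a geometric monotonicity: for any sub-cylinder $R = \cyl(A+sv, h_0) \subset \cyl(A,h)$ with the same transverse base $A$ (so $|s|+h_0 \leq h$),
$$\phi_n(A,h) \leq \phi_n(A+sv, h_0).$$
Indeed $R$ shares its lateral boundary with $\cyl(A,h)$ on the height slab $[s-h_0, s+h_0]$, so any path from the top of $\cyl(A,h)$ to its bottom that stays inside $\cyl(A,h)$ must enter and leave $R$ only through its top and bottom discs; a parity/flow-conservation argument then produces a subpath of it running from $T_n(A+sv,h_0)$ to $B_n(A+sv,h_0)$ inside $R$, which crosses any $\phi_n$-cut of $R$.

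Next I would slice $\cyl(A,h)$ into $N = \lfloor h/h_0 \rfloor$ disjoint sub-cylinders $R_k = \cyl(A+s_k v, h_0/2)$ with the $s_k$ spaced by $h_0$, so that the underlying edge sets are disjoint. The variables $\phi_n(R_k)$ are then independent, and the monotonicity above yields
$$\Prb\bigl[\phi_n(A,h) \geq \lambda \cH^{d-1}(A) n^{d-1}\bigr] \leq \prod_{k=1}^N \Prb\bigl[\phi_n(R_k) \geq \lambda \cH^{d-1}(A) n^{d-1}\bigr].$$
For each factor I would invoke the upper large deviation bound for the flow through a cylinder of fixed shape, which for $\lambda > \nu(v)$ takes the form
$$\Prb\bigl[\phi_n(R_k) \geq \lambda \cH^{d-1}(A) n^{d-1}\bigr] \leq c_1 \exp(-c_2 n^d),$$
with constants depending only on $\lambda, h_0, v, G$. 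This comes from the definition $\nu(v) = \lim_n \E[\tau_n(A,h_0/2)]/(\cH^{d-1}(A) n^{d-1})$ together with a concentration inequality on a region with $\Theta(n^d)$ edges. Multiplying the $N$ factors and absorbing $h_0$ into the constants yields the desired $C_1 \exp(-C_2 h n^d)$ decay.

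The main obstacle will be obtaining the per-slice rate $n^d$ (rather than the easier $n^{d-1}$), uniformly in the whole range $\lambda > \nu(v)$. A naive Cram{\'e}r bound on a single flat cut of a slice only gives rate $n^{d-1}$ and only works once $\lambda$ exceeds $\E[t(e)]$, which in general is strictly larger than $\nu(v)$. To recover both the sharp threshold $\nu(v)$ and the sharp rate $n^d$, one has to apply a Cram{\'e}r-type bound to the sum of the min-cut capacities of many disjoint intermediate-scale sub-boxes tiling each slice, whose mean capacities are close to $\nu(v) \cH^{d-1}(A) n^{d-1}$ by the definition of the flow constant; this is exactly the refined renormalisation argument carried out in \cite{Rossignol2010}, which I would invoke rather than redo in full detail.
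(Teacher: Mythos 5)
Your proposal is correct and, at bottom, takes the same route as the paper: the paper offers no self-contained argument either, deriving the statement as a straightforward application of Theorem 4 in \cite{TheretUpperTau14}, which already contains the $hn^{d}$ rate for every $\lambda>\nu(v)$. Your slicing-plus-independence step is a valid way to manufacture the $h$-dependence of the exponent from a fixed-height bound, but the whole difficulty --- the per-slice volume-order estimate down to the sharp threshold $\nu(v)$ --- is deferred to the renormalisation results of \cite{Rossignol2010}/\cite{TheretUpperTau14}, exactly as the paper defers it.
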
 
\noindent This theorem is a straightforward application of Theorem 4 in \cite{TheretUpperTau14}.
To ease the reading, constants may change from appearance to appearance.

\section{Upper large deviations} \label{sectionULD}
The goal of this section is to prove Theorem \ref{ULD}.
  \subsection{The case of a cylinder}
  In this section, we will use Theorem \ref{upperlargedeviationcyl} which is the main probabilistic estimate needed to prove Theorem \ref{ULD}.  A convex polytope of dimension $d-1$ is a convex polytope $F$ which is contained in an hyperplane of $\sR^d$ and such that $\cH^{d-1}(F)>0$. We have the following Lemma.
\begin{lem}\label{lem2}Let $F$ be a convex polytope of dimension $d-1$. Let $v$ be a unit vector normal to $F$. Let $h>0$. There exist positive real numbers $C_1$ and $C_2$ depending on $F$, $G$ and $d$ such that, for all $n\geq 1$, for all $\lambda>\nu(v)\cH^{d-1}(F)$,
$$\Prb[\tau_n(F,h)\geq \lambda n^{d-1}]\leq C_1\exp(-C_2 n^{d-1})$$
\end{lem}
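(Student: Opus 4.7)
The plan is to upper-bound $\tau_n(F,h)$ by constructing, for each $n$, a set $E_n\subset\E_n^d$ which separates $C'_{1,n}(F,h)$ from $C'_{2,n}(F,h)$ inside $\cyl(F,h)$, and then controlling $V(E_n)$ by reducing to Theorem~\ref{upperlargedeviationcyl} on small hyperrectangles. I will assemble $E_n$ from minimum top-to-bottom cuts of cylinders $\cyl(R_i,h)$, where the $R_i$ are small hyperrectangles tiling a large part of $F$, glued together by auxiliary ``wall'' edges of small total capacity.

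\textbf{Geometric tiling.} The polytope $F$ lies in a hyperplane $H$ normal to $v$. For prescribed $\delta>0$ and $\ell>0$, paving $H$ by a $(d-1)$-dimensional grid of closed hyperrectangles normal to $v$ of side $\ell$ and keeping those contained in $F$ yields a finite family $R_1,\dots,R_m$ with $P:=\bigcup_i R_i\subset F$ and $\cH^{d-1}(F\setminus P)<\delta$ as soon as $\ell$ is small enough, the number $m$ of tiles depending only on $\ell,\delta,F$ and not on $n$.

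\textbf{Construction of $E_n$.} For each $i$ let $E_i$ be a minimum top-to-bottom cutset of $\cyl(R_i,h)$, so $V(E_i)=\phi_n(R_i,h)$. Introduce three further sets of edges of $\E_n^d$: $W_{\mathrm{wall}}$ consisting of the edges crossing one of the vertical walls $\cyl(\partial R_i,h)$; $W_{\mathrm{hole}}$ consisting of the edges crossing $H$ at a point of $F\setminus P$; and $W_{\mathrm{seal}}$ consisting of a thin collar of edges near $\partial F$ at height $0$, designed to forbid paths sliding along the lateral boundary of $\cyl(F,h)$. Set $E_n:=\bigcup_i E_i\cup W_{\mathrm{wall}}\cup W_{\mathrm{hole}}\cup W_{\mathrm{seal}}$. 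Any path $\gamma$ joining $C'_{1,n}(F,h)$ to $C'_{2,n}(F,h)$ which avoids $W_{\mathrm{wall}}$ must remain within a single piece: either $\cyl(F\setminus P,h)$, where its mandatory hyperplane-crossing edge belongs to $W_{\mathrm{hole}}$, or some $\cyl(R_i,h)$, where $\gamma$ can enter and exit only through the top and bottom faces (the lateral faces of interior tiles being inaccessible and those of boundary tiles being blocked by $W_{\mathrm{seal}}$), and so $\gamma$ crosses the top-to-bottom cut $E_i$.

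\textbf{Concentration and conclusion.} Pick $\lambda'>\nu(v)$ close enough that $\lambda'\cH^{d-1}(F)<\lambda$. By Theorem~\ref{upperlargedeviationcyl} applied to each $R_i$ and a union bound over the $n$-independent family of $m$ rectangles,
$$\Prb\Bigl[\,\textstyle\sum_i V(E_i)\geq\lambda'\cH^{d-1}(F)n^{d-1}\,\Bigr]\leq mC_1\exp(-C_2hn^d).$$
The glue sets have cardinalities $|W_{\mathrm{wall}}|=O(h\,\cH^{d-1}(F)\ell^{-1}n^{d-1})$, $|W_{\mathrm{hole}}|=O(\delta\, n^{d-1})$ and $|W_{\mathrm{seal}}|=O(n^{d-2})$; a Cram\'er bound for i.i.d.\ sums with an exponential moment yields $V(W_{\mathrm{wall}}\cup W_{\mathrm{hole}}\cup W_{\mathrm{seal}})\leq 2K\,\E[t(e)]\,n^{d-1}$ outside an event of probability $\leq C_1'\exp(-C_2'n^{d-1})$, where $K$ collects the cardinality constants above. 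Choosing $\ell$ large and $\delta$ small enough that $\lambda'\cH^{d-1}(F)+2K\E[t(e)]<\lambda$, a final union bound gives the required estimate. The delicate step is the geometric verification that $E_n$ truly separates $C'_{1,n}(F,h)$ from $C'_{2,n}(F,h)$, especially for tiles touching $\partial F$; if the elementary seal $W_{\mathrm{seal}}$ proves insufficient, a cleaner alternative is to replace $E_i$ by a genuine $\tau_n$-cutset of $\cyl(R_i,h)$, itself controlled by the routine bound $\tau_n(R_i,h)\leq\phi_n(R_i,h')+O(n^{d-2})$ arising from a vertical enlargement $h'=h+1/n$.
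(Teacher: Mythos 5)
Your overall strategy is the one the paper follows: tile (most of) $F$ by small $(d-1)$-dimensional pieces, take a minimal cutset in each small cylinder, glue these with low-capacity sets of edges, and conclude with Theorem~\ref{upperlargedeviationcyl} plus a Cram\'er bound on the glue. The gap is in the glue. Because you take $E_i$ to be a minimal \emph{top-to-bottom} cutset of $\cyl(R_i,h)$, its trace on the lateral boundary of $\cyl(R_i,h)$ is completely free, and to prevent a path from sliding around $E_i$ through a lateral face you are forced to include the full vertical walls $W_{\mathrm{wall}}$. Their cardinality is of order $h\,\cH^{d-1}(F)\,\ell^{-1}n^{d-1}$, so after Cram\'er their capacity is at least of order $h\,\E[t(e)]\,\ell^{-1}\cH^{d-1}(F)\,n^{d-1}$. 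You need this to be smaller than $(\lambda-\lambda'\cH^{d-1}(F))n^{d-1}$, an arbitrarily small fixed multiple of $n^{d-1}$; but $\ell$ is bounded above by the diameter of $F$, so the wall cost is bounded below by $c\,h\,n^{d-1}/\mathrm{diam}(F)$, and ``choosing $\ell$ large'' cannot make it small. For $\lambda$ close to $\nu(v)\cH^{d-1}(F)$, or for $h$ not small, the construction therefore does not produce a cutset of capacity below $\lambda n^{d-1}$ and the argument does not close. Your fallback $\tau_n(R_i,h)\le\phi_n(R_i,h')+O(n^{d-2})$ suffers from the same flaw: a $\phi_n$-minimal cutset of the enlarged cylinder may sit at any height, and anchoring its lateral trace down to height $0$ (which is what a $\tau_n$-cutset requires) costs a curtain of $\Theta(h\,\ell^{d-2}n^{d-1})$ edges, not $O(n^{d-2})$; this bound is neither routine nor true as stated.

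The paper resolves exactly this point by taking the $E_i$ to be minimal cutsets for $\tau_n(S_i,h)$ rather than for $\phi_n(S_i,h)$. Such a cutset separates the whole upper half of the boundary of $\cyl(S_i,h)$ from the whole lower half, hence is pinned near $\partial S_i$ at height close to $0$; cutsets in adjacent cylinders then nearly match along their common face, and it suffices to add the edges lying in a $(4d/n)$-neighbourhood of the $(d-2)$-dimensional skeleton $\cup_i\partial S_i$ and of the margin $F\setminus\cup_iS_i$. This glue has $O(\kappa\,\cH^{d-2}(\partial F)\,n^{d-1})+O(\kappa^{-1}n^{d-2})$ edges, independently of $h$, and its capacity is made as small as desired by shrinking the mesh $\kappa$ --- precisely the degree of freedom that your walls destroy. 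The probability that $\tau_n(S_i,h)$ is large is then bounded via $\phi_n(S_i,h)$, to which Theorem~\ref{upperlargedeviationcyl} applies. If you rework your construction with $\tau_n$-cutsets of the tiles and a thin collar at height $0$ in place of $W_{\mathrm{wall}}$, the rest of your proof (tiling, Cram\'er bound, union bound) goes through essentially as you wrote it.
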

\begin{proof} Let $F$ be a convex polytope of dimension $d-1$ and $v$ a unit vector normal to $F$.   We shall cover $F$ by a finite family of hypersquares and control the probability that the flow is abnormally big in $\cyl(F,h)$ by the probability that the flow is abnormally big in one of the cylinders of square basis. Let $\lambda>\nu(v)\cH^{d-1}(F)$.
Let $\kappa>0$ be a real number that we will choose later. We denote by $S(\kappa)$ an hypersquare of dimension $d-1$ of side length $\kappa$ and normal to $v$. We shall cover the following subset of $F$ by hypersquares isometric to $S(\kappa)$:
$$D(\kappa,F)=\Big\{\,x\in F\,: \, d(x,\partial F)>2\sqrt{d}\kappa\,\Big\}\,.$$
There exists a finite family $(S_i)_{i\in I}$ of subsets of $F$, which are translates of $S(\kappa)$ having pairwise disjoint interiors and such that $D(\kappa,F )\subset\cup_{i\in I} S_i$ (see figure \ref{fig2}). Moreover, we have
\begin{align}\label{controleI}
|I|\leq \frac{\cH ^{d-1}(F)}{\cH ^{d-1}(S(\kappa))}\,
\end{align}
and there exists a constant $c_d$ depending only on the dimension such as 
$$\cH^{d-1}\big(F\setminus D(\kappa,F)\big)\leq c_d\cH^{d-2}(\partial F) \,\kappa\,.$$

\begin{figure}[H]
\def\svgwidth{0.5\textwidth}
\begin{center}
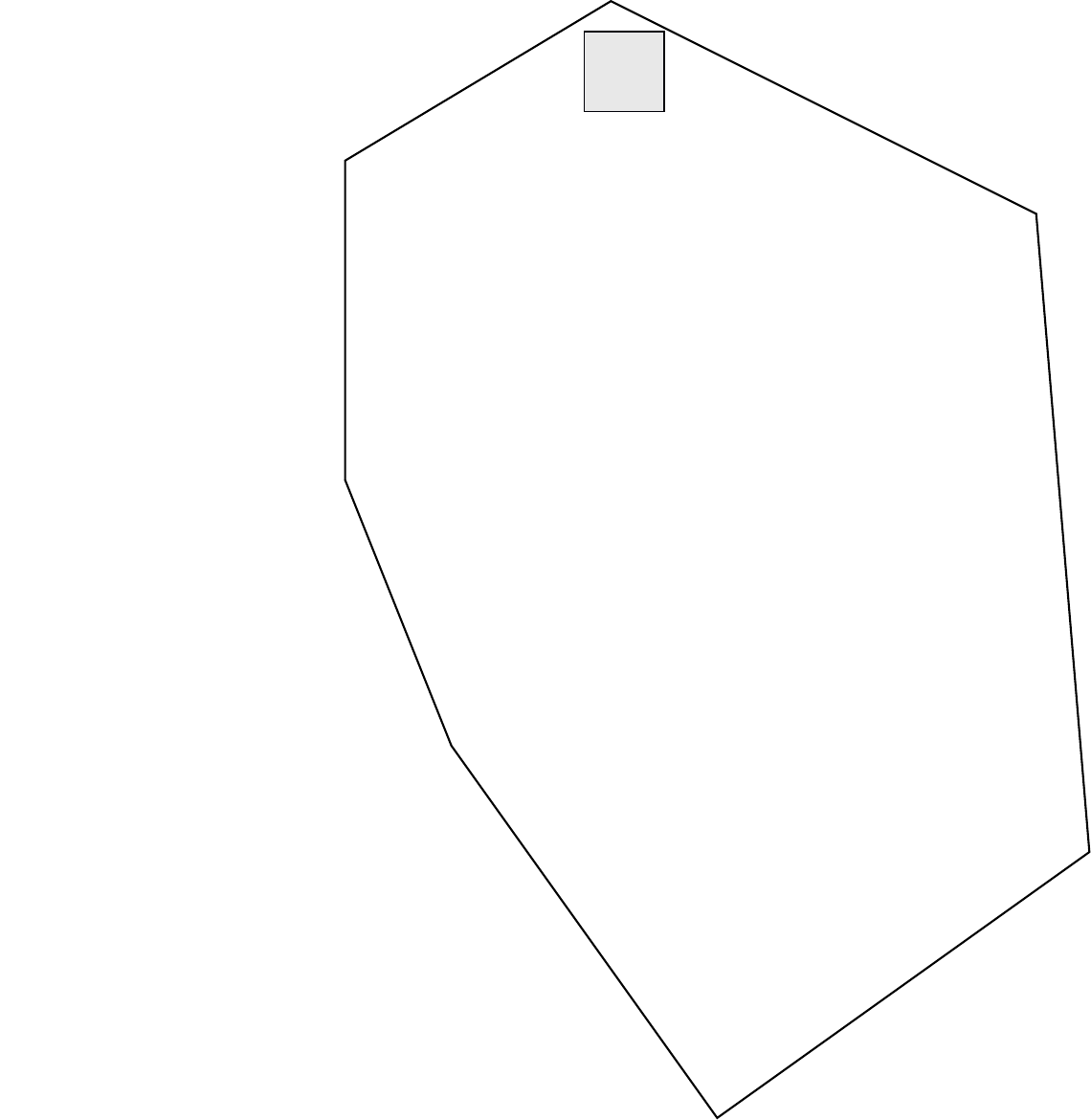
\caption[fig2]{\label{fig2}Covering $P$ with hypersquares}
\end{center}
\end{figure} Let $h>0$.
We want to build a cutset between $C'_1(F,h)$ and $C'_2(F,h)$ out of minimal cutsets for the flows  $\tau_n(S_i,h)$, $i\in I$. Note that a cutset that achieves the infimum defining $\tau_n(S_i,h)$ is anchored near the boundary $\partial S_i$. However, if we pick up two hypersquares $S_i$ and $S_j$ that share a common side, their corresponding minimal cutsets for the flow $\tau_n$ do not necessarily have the same trace on the common face of the associated cylinders $\cyl(S_i,h)$ and $\cyl(S_j,h)$. We shall fix this problem by adding extra edges in order to glue properly the cutsets. Due to the discretization, we will need also to add extra edges around the boundaries of the hypersquares $\partial S_i$ and in the region $F\setminus D(\kappa,F)$ in order to build a cutset.
For $i\in I$, let $E_i$ be a minimal cutset for $\tau_n(S_i,h)$, \textit{i.e.}, $E_i\subset \E_n^d$ separates $C'_1(S_i,h)$ from $C'_2(S_i,h)$ in $\cyl(S_i,h)$ and $V(E_i)=\tau_n(S_i,h)$. 
We fix $\zeta=4d/n$. Let  $E_0$ be the set of the edges of $\E_n^d$ included in $\cE_{0}$, where we define 
$$\cE_0=\Big\{\,x\in\sR^d\,:\,d(x,F\setminus \cup_{i\in I}S_i)\leq \zeta\,\Big\}\cup\bigcup_{i\in I}\Big\{\,x\in\sR^d\,:\,d(x,\partial S_i)\leq \zeta\,\Big\}\,.$$ 
The set of edges $E_0\cup\bigcup_{i\in I} E_i$ separates $C'_1(F,h)$ from $C'_2(F,h)$ in $\cyl(F,h)$ therefore
\begin{align}\label{eq31}
\tau_n(F,h)\leq V(E_0)+\sum_{i\in I} V(E_i)= V(E_0)+\sum_{i\in I}\tau_n(S_i,h)\,.
\end{align}
There exists a constant $c'_d$ depending only on $d$ such that:
\begin{align*}
|E_0|\leq c'_d\left(\kappa n^{d-1}\cH^{d-2}(\partial F)+|I|\cH^{d-2}(\partial S(\kappa))n^{d-2}\right)\,.
\end{align*}
 Using \eqref{controleI}, we obtain
 \begin{align*}
|E_0|&\leq c'_d\left(\kappa n^{d-1}\cH^{d-2}(\partial F)+\dfrac{\cH^{d-1}(F)}{\cH^{d-1}(S(\kappa))}\cH^{d-2}(\partial S(\kappa))n^{d-2}\right)\\
&\leq c'_d\left(\kappa n^{d-1}\cH^{d-2}(\partial F)+2d\dfrac{\cH^{d-1}(F)}{\kappa}n^{d-2}\right)\,.
\end{align*}
Thus, for $n$ large enough, 
\begin{align}\label{eq32}
|E_0| \leq 2c'_d\,\kappa\,\cH^{d-2}(\partial F) n^{d-1}\,.
\end{align}
There exists $s>0$ such that $\lambda>(1+s)\nu(v)\cH ^{d-1}(F)$. Thanks to inequality \eqref{eq31}, we obtain
\begin{align}\label{eqb1}
\Prb[\tau_n(F,h)\geq \lambda n^{d-1}]&\leq\Prb\left[V(E_0)+\sum_{i\in I}\tau_n(S_i,h)\geq (1+s)\nu(v)\cH ^{d-1}(F)n^{d-1}\right]\nonumber\\
&\leq \sum_{i\in I} \Prb[\tau_n(S_i,h)\geq (1+s/2)\nu(v)\cH ^{d-1}(S_i)n ^{d-1}]\nonumber\\
&\hspace{0.5cm} + \Prb\left[V(E_0)\geq \frac{s}{2}\nu(v)n ^{d-1}\cH ^{d-1}(F)\right]\nonumber\\ 
&\leq \sum_{i\in I} \Prb[\phi_n(S_i,h)\geq (1+s/2)\nu(v)\cH ^{d-1}(S_i)n ^{d-1}]\nonumber\\
&\hspace{0.5cm} + \Prb\left[\sum_{i=1}^{2c'_d\kappa\cH^{d-2}(\partial F) n^{d-1}}t_i\geq \frac{s}{2}\nu(v)n ^{d-1}\cH ^{d-1}(F)\right] \,,
\end{align}
where $(t_i)_{i\in \sN}$ is a family of i.i.d. random variables of common probability distribution $G$. We use in the last inequality the fact that $\phi_n(S_i,h)\geq \tau_n(S_i,h)$ and inequality \eqref{eq32}. We can choose $\kappa$ small enough so that
$$2c'_d\kappa\cH^{d-2}(\partial F)  \E[t_i]<s\nu_{min}\cH ^{d-1}(F)/2\, .$$
Moreover, as $G$ admits an exponential moment, the Cram{\'e}r theorem in $\sR$ gives the existence of positive constants $D$ and $D'$ depending on $G$, $F$, $s$ and $d$ such that, for any $n\geq 1$,
\begin{align}\label{eqb2}
\Prb\left[\sum_{i=1}^{2c'_d\kappa\cH^{d-2}(\partial F) n^{d-1}}t_i\geq \frac{s}{2}\nu(v)n ^{d-1}\cH ^{d-1}(F)\right]\leq D\exp(-D'n^{d-1})\,.
\end{align}
Thanks to Theorem \ref{upperlargedeviationcyl}, there exist positive real numbers $C_1$, $C_2$ such that  for $i\in I$, for any $n\geq 1$,
\begin{align}\label{eqb22}
\Prb[\phi_n(S_i,h)\geq (1+s/2)\nu(v)\cH ^{d-1}(S_i)n ^{d-1}]\leq C_1\exp(-C_2hn^d).
\end{align}
By combining inequalities \eqref{eqb1} and \eqref{eqb2} and \eqref{eqb22}, we obtain
\begin{align*}
\Prb[\tau_n(F,h)\geq \lambda n^{d-1}]\leq  D\exp(-D'n^{d-1})+ |I|C_1\exp(-C_2hn^d)\,,
\end{align*}
and the result follows.
\end{proof} 

\subsection{Proof of Theorem \ref{ULD}}
Let $A$ be a compact convex subset of $\sR^d$.   Let $\lambda>\cI(A)$ and let $s>0$ be such that $\lambda>(1+s)\cI(A)$. 
By Lemma \ref{ApproP}, there exists a convex polytope $P$ such that $A\subset P$ and 
\begin{align}\label{condI}
(1+s)\cI(A)\geq (1+s/2)\cI(P)\,.
\end{align}
Let us denote by $F_1,\dots, F_m$ the faces of $P$ and let $v_1,\dots,v_m$ be the associated exterior unit vectors. Let $\ep>0$. For $i\in\{1,\dots,m\}$, we define $C_i=\cyl(F_i+\ep v_i,\ep)$. The sets $C_i$, $1\leq i\leq m$, have pairwise disjoint interiors. Indeed, assume that there exists $z\in \smash{\overset{o}{C_i}\cap \overset{o}{C_j}}$ for some $i\neq j$. Then there exist unique $x\in F_i$, $y\in F_j$ and $h,h'<2\ep$ such that $z=x+hv_i=y+h'v_j$. In fact, the point $x$ (respectively $y$) is the orthogonal projection of $z$ on the face $F_i$ (resp. $F_j$). As $P$ is convex, the orthogonal projection of $z$ on $P$ is unique so $x=y$ and $x\in F_i\cap F_j$. In particular, the point $x$ is in the boundary of $F_i$. This contradicts the fact that $z$ belongs to the interior of $C_i$.
We aim now to build a cutset that cuts $P$ from infinity out of cutsets of minimal capacities for $\tau_n(F_i+\ep v_i, \ep)$, $i\in\{1,\dots,m\}$. The union of these cutsets is not enough to form a cutset from $P$ to infinity because there might be holes between these cutsets. For $i\in\{1,\dots,m\}$, a minimal cutset for $\tau_n(F_i+\ep v_i, \ep)$ is pinned around the boundary of $\partial (F_i+\ep v_i)$. We need to add bridges around $\partial (F_i+\ep v_i)$ to close the potential holes between these cutsets (see figure \ref{fig3}). As the distance between two adjacent $\partial (F_i+\ep v_i)$ decreases with $\ep$, by taking $\ep$ small enough, the size of the bridges and so their capacities is not too big and may be adequately controlled. Next, we shall control the maximal flow through the cylinders or equivalently the capacity of the minimal cutsets in the cylinders thanks to Lemma \ref{lem2}.
 
For $i\in \{1,\dots,m\}$, let $E'_i\subset \E_n  ^d$ be a minimal cutset for $\tau_n(C_i,\ep)$, \textit{i.e.}, $E'_i$ cuts $C'_1(F_i,\ep)$ from $C'_2(F_i,\ep)$ and $V(E'_i)=\tau_n(F_i+\ep v_i,\ep)$. We shall add edges to control the space between $E'_i$ and the boundary $\partial (F_i+\ep v_i)$. 
Let $i,j\in\{1,\dots,m\}$ such that $F_i$ and $F_j$ share a common side. We denote by $\cM(i,j)$:
$$\cM(i,j)=\cV(F_i\cap F_j, \ep +\zeta)\setminus \cV(A,\ep-\zeta)\,.$$
\begin{figure}[H]
\def\svgwidth{0.9\textwidth}
\begin{center}
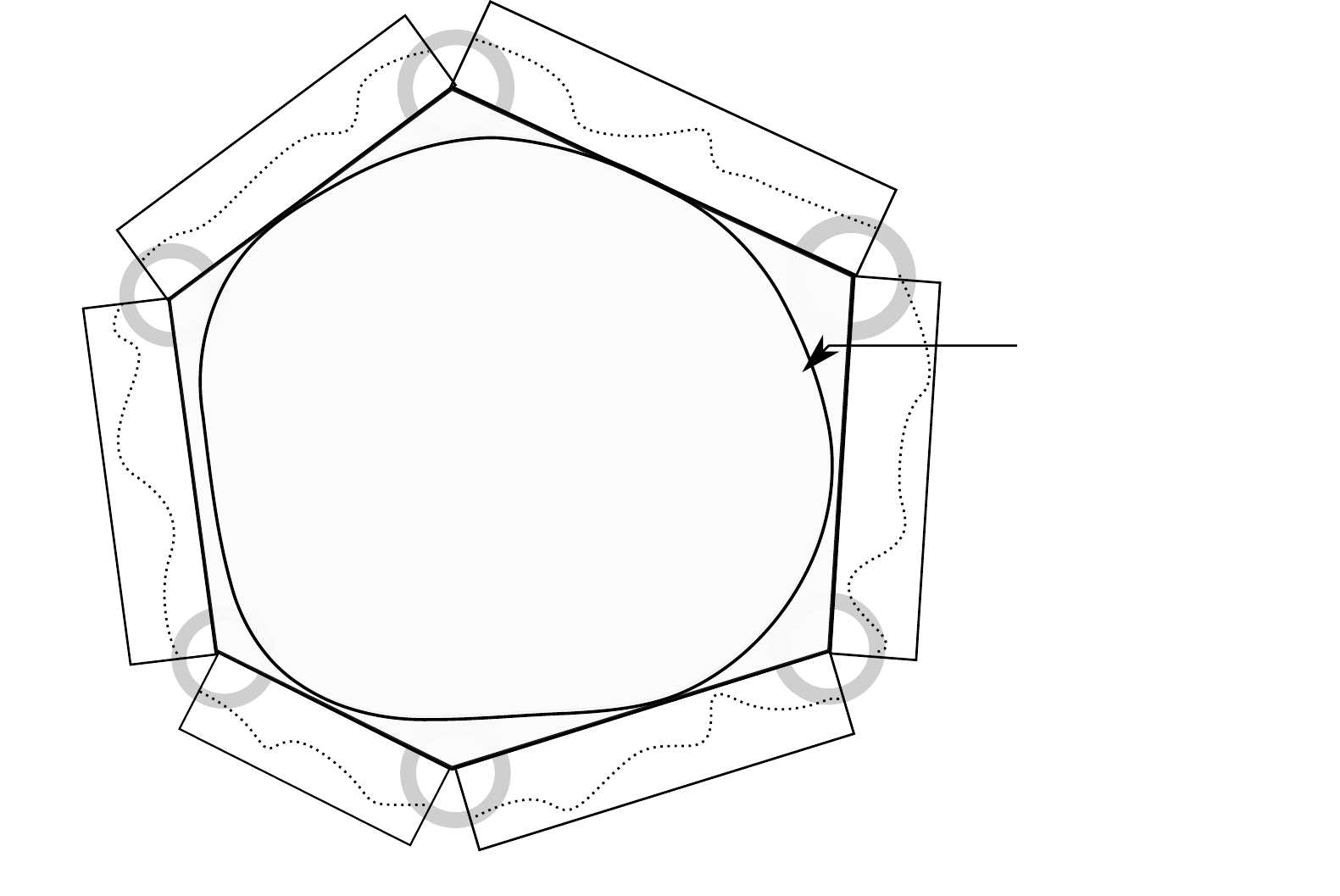
\caption[fig3]{\label{fig3}Construction of a cutset from $P$ to infinity}
\end{center}
\end{figure}
\noindent Let $M_{i,j}$ denote the set of the edges in $\E_n^d$ included in $\cM(i,j)$ (see figure \ref{fig3}).
There exists a constant $c'_d$ depending only on the dimension $d$ such that, for all $i,j\in\{1,\dots,m\}$ such that $F_i$ and $F_j$ share a common side,
\begin{align*}
|M_{i,j}|\leq c_d \ep^{d-1} n^{d-1}\,.
\end{align*}
We set $$M=\bigcup_{i,j}M_{i,j}\,,$$ where the union is over $i,j\in\{1,\dots,m\}$ such that $F_i$ and $F_j$ share a common side. The set $M\cup \left(\bigcup_{i=1}^m E'_i\right)$ cuts $P$ from infinity, therefore
\begin{align}\label{eq41}
\mincut_n(P,\infty) \leq V(M)+\sum_{i=1}^m V(E'_i)= V(M)+\sum_{i=1}^m \tau_n(F_i+\ep v_i,\ep)\,.
\end{align}
As $P$ is a polytope, $$\cI(P)=\sum_{i=1}^m \nu(v_i)\cH^{d-1}(F_i)\,,$$ and as $A\subset P$, we have $\mincut_n(A,\infty)\leq \mincut_n(P,\infty)$. Then, using inequalities \eqref{condI} and \eqref{eq41}, we obtain
\begin{align}\label{eq42}
\Prb&\left[\mincut_n(A,\infty)\geq \lambda n^{d-1}\right]\nonumber\\
&\hspace{3cm}\leq\Prb\left[\mincut_n(A,\infty)\geq (1+s)n^{d-1}\cI(A)\right]\nonumber\\
 &\hspace{3cm}\leq \Prb\left[\mincut_n(P,\infty)\geq (1+s/2)n^{d-1}\cI(P)\right]\nonumber\\
&\hspace{3cm}\leq\Prb\left[V(M)+\sum_{i=1}^m \tau_n(F_i+\ep v_i,\ep)\geq (1+s/2)n ^{d-1}\cI(P)\right]\nonumber\\
&\hspace{3cm}\leq \Prb[V(M)>s\cI(P)n ^{d-1}/4]\nonumber\\
&\hspace{3.5cm}+\sum_{i=1}^m\Prb\left[\tau_n(F_i+\ep v_i,\ep)\geq (1+s/4)n ^{d-1}\cI(F_i)\nu(v_i)\right]\,.
\end{align}
Moreover, we have
\begin{align}\label{eq43}
\Prb\left[V(M)>s\cI(P)n ^{d-1}/4\right]\leq \Prb\left[\sum_{i=1}^{c_d m^2 \ep^{d-1}n ^{d-1}}t_i\geq s\cI(P)n ^{d-1}/4\right]\,,
\end{align}
where $(t_i)_{i\in \sN}$ is a family of i.i.d. random variables of common probability distribution $G$. We choose $\ep$ small enough so that
$$c_d m ^2\ep^{d-1}\E[t_i] <s\cI(P)/4\, .$$
Since $G$ admits an exponential moment, then the Cram{\'e}r theorem in $\sR$ gives the existence of positive constants $D$ and $D'$ depending on $G$, $P$, $s$ and $d$ such that
\begin{align}\label{eq44}
\Prb\left[\sum_{i=1}^{c_d m^2 \ep^{d-1}n ^{d-1}}t_i\geq s\cI(P)/2n ^{d-1}\right]\leq D\exp(-D'n^{d-1})\,.
\end{align}
By Lemma \ref{lem2}, there exist positive real numbers $C_1$ and $C_2$ depending on $P$, $s$, $G$ and $d$ such that for all $i\in\{1,\dots,m\}$,
\begin{align}\label{eq45}
\Prb\left[\tau_n(F_i+\ep v_i,\ep)\geq (1+s/4)n ^{d-1}\cI(F_i)\nu(v_i)\right]\leq C_1\exp(-C_2n^{d-1})\,.
\end{align}
We conclude by combining inequalities \eqref{eq42}, \eqref{eq43}, \eqref{eq44} and \eqref{eq45} that
$$\Prb[\mincut_n(A,\infty)\geq \lambda n^{d-1}]\leq D\exp(-D'n^{d-1})+m \,C_1\exp(C_2 n^{d-1})\,.$$
This yields the desired conclusion.
\section{Existence of a minimal cutset}\label{s6}
In this section, we recall the fundamental result of Zhang which enables to control the cardinality of one specific cutset. We use this opportunity to precise an important point, namely we prove in addition that there exists a minimal cutset $E$ between a convex $A$ and infinity.  We here prove Theorem \ref{thmZhang} using the work of Zhang \cite{Zhang2017}.

Throughout the proof we work on the lattice $(\sZ^d,\E^d)$. Let $A$ be a convex compact subset of $\sR^d$. As any path from $A$ to infinity has to go through an edge of $\partial_e A$, the set $\partial_e A$ cuts $A$ from infinity and $\mincut(A,\infty)\leq V(\partial_e A)$. Let $E$ be a cutset between $A$ and infinity such that $V(E) \leq V(\partial_e A)$.
We want to control the probability that $E$ has too many edges. To do that we distinguish three types of edges that we will handle differently. Let $\ep$ be a positive constant that we will adjust later. We define:
 
$\bullet$ The $\ep^+$ edges are the edges $e\in E$ such that $t(e)>\ep$. We denote by $N^+ (E)$ the number of $\ep^+$ edges in $E$. We can control $N^+ (E)$ thanks to the fact that $V(E)\leq V(\partial_e A)$ and so $\ep N ^+ (E)\leq V(\partial_e A)$.

$\bullet$ The $\ep^-$ edges that are the edges $e\in E$ such that $0<t(e)\leq\ep$. We denote by $N^- (E)$ the number of $\ep^-$ edges in $E$. As the probability of being an $\ep^-$ edge goes to $0$ when $\ep$ goes to $0$, we can choose $\ep$ so that, with high probability, $N^- (E)$ does not exceed a certain proportion of $|E|$, the number of edges in $E$.

$\bullet$ The closed edges or zero edges that are the edges of null passage times. Once we have controlled the number of $\ep^+$ and $\ep ^-$ edges, the size of $E$ cannot be too big otherwise the number of closed edges, would be also big and this would mean that there exist large surfaces of closed edges which is an unlikely event when $G(\{0\})<1-p_c(d)$. 


We start now with these estimates. Let $n\geq 1$. Let $E$ be a cutset from $A$ to infinity such that $V(E)\leq V(\partial_e A)$ and $|E|=n$. We start by controlling the $\ep ^+ $ edges by controlling the capacity of $\partial_e A$:
\begin{align}\label{eq51}
&\Prb\Big(\,\exists E\subset \E^d, \text{ $E$ cuts $A$ from infinity, $V(E)\leq V(\partial_e A)$ and $|E|=n$}\,\Big)\nonumber\\
&\hspace{0.5cm} \leq \Prb\Big(\,\exists E\subset \E^d, \text{ $E$ cuts $A$ from infinity, $V(E)\leq \ep^2n$ and $|E|=n$}\,\Big)\nonumber\\
&\hspace{1cm}+\Prb\Big(\,V(\partial_e A)\geq \ep^2n\,\Big)\nonumber\\
&\hspace{0.5cm} \leq \Prb\Big(\,\exists E\subset \E^d, \text{ $E$ cuts $A$ from infinity, $N^+(E)\leq \ep n$ and $|E|=n$}\,\Big)\nonumber\\
&\hspace{1cm}+\Prb\Big(\,V(\partial_e A)\geq \ep^2n\,\Big)\,.
\end{align}
As $G$ admits an exponential moment, we obtain
\begin{align}\label{corre1}
\Prb\Big(\,V(\partial_e A)\geq \ep^2n\,\Big)&=\Prb\Big(\,\theta V(\partial_e A)\geq \theta \ep^2n\,\Big)\nonumber\\
&\leq \exp(-\theta\ep^2 n)\E(\exp(\theta V(\partial_e A)))\nonumber\\
&= \exp(-\theta\ep^2 n)\left(\int_{\sR^+}\exp(\theta x)dG(x)\right)^{|\partial_e A|}\,.
\end{align}
We take $$\lambda=2\ln \left(\int_{\sR^+}\exp(\theta x)dG(x)\right)/\theta\ep^2\,.$$ For $n>\lambda |\partial_e A|$, we have, using \eqref{corre1},
\begin{align}\label{corre2}
\Prb\Big(\,V(\partial_e A)\geq \ep^2n\,\Big)\leq \exp\left(-\frac{1}{2}\theta \ep ^2 n\right)\,.
\end{align}
Combining inequalities \eqref{eq51} and \eqref{corre2}, we get 
\begin{align}\label{star}
&\Prb\Big(\,\exists E\subset \E^d, \text{ $E$ cuts $A$ from infinity, $V(E)\leq V(\partial_e A)$ and $|E|=n$}\,\Big)\nonumber\\
&\hspace{1cm} \leq \Prb\Big(\,\exists E\subset \E^d, \text{ $E$ cuts $A$ from infinity, $N^+(E)\leq \ep n$ and $|E|=n$}\,\Big)\nonumber\\
&\hspace{1.5cm}+\exp\left(-\frac{1}{2}\theta \ep ^2 n\right)\,.
\end{align}
We control next the number of $\ep ^- $ edges. We define $\delta_1=\delta_1(\ep)=G(]0,\ep])$ the probability that an edge $e$ is an $\ep ^-$ edge.   The probability $\delta_1(\ep)$ goes to $0$ when $\ep$ goes to $0$.
We bound the number of cutsets of size $n$ with the help of combinatorial arguments. 
As in the original proof of Zhang, we fix a vertex belonging to an edge of $E$. Since $E$ is a cutset, then at least one edge of $E$ has an extremity on the vertical line $L=\{\,(0,\dots,0,x_d), \,x_d\in\sR\,\}$. Moreover, the set $E$ is finite. Let $z=(0,\dots,0,x_d)$ be the highest vertex of $L$ belonging to an extremity of an edge of $E$.  Since $|E|\leq n$, then certainly $x_d\leq n$. 
We denote by $\widehat{E}$ the set of the vertices of $\sZ^d$ that are connected to a vertex in $A$ without using an edge in $E$, \textit{i.e.},
$$\widehat{E}=\left\{x\in \sZ^d:\,\,\begin{array}{c}
\text{there exists a path from $x$ to $A$ which }\\\text{does not go through an edge in $E$}\end{array} \right\}$$
We denote by $\partial_v \widehat{E}$ the exterior vertex boundary of $\widehat{E}$, defined as
$$\partial_v \widehat{E}=\left\{x\in \sZ^d\setminus \widehat{E}:\begin{array}{c} \text{ $x$ has a neighbour in $\widehat{E}$ and there exists}\\\text{a path from $x$ to infinity in $\sR^d\setminus \widehat{E}$}\end{array}\right\}\,.$$
This set is the analogue of $\partial_e \widehat{Z}(k,m)$ in \cite{Zhang2017}.
By Lemma 10 in \cite{Zhang2017}, the set $\partial_v \widehat{E}$ is $\sZ^d$ connected, it contains $z$ and moreover 
$$|\partial_v \widehat{E}|\leq 3^{d+1}n\,.$$
Once the vertex $z$ is fixed, the set $\partial_v \widehat{E}$ is a $\sZ^d$ connected set and we can apply the bound (4.24) in \cite{grimmettt:percolation}, there are at most $7^{d3^{d+1}n}$ possible choices for $\partial_v\widehat{E}$. We recall that each vertex has at most $2d$ adjacent edges. Once the set $\partial_v \widehat{E}$ is fixed, we bound the number of possible choices for the set $E$ by 
$$\sum_{k=1}^{3^{d+1}n} \binom{3^{d+1}n}{k}(2d) ^k \leq (2d+1)^{3^{d+1}n}\,.$$
Let $D$ be a positive constant that will be adjusted later. By summing on the coordinate $x_d$ of $z$, on the choice of $\partial_v E$ and $E$, we have
\begin{align}\label{eq52}
&\Prb[\exists E\subset \E^d, \text{ $E$ cuts $A$ from infinity, $N^- (E)\geq -(D/\ln \delta_1 )|E|$ and $|E|=n$}]\nonumber\\
&\hspace{1cm} \leq \sum_{i=0}^n\Prb\left[\begin{array}{c}\exists E\subset \E^d, \text{ $E$ cuts $A$ from infinity,}\\ \text{$N^- (E)\geq -(D/\ln\delta_1)n$, $x_d=i$ and $|E|=n$}\end{array}\right]\nonumber\\
&\hspace{1cm} \leq n 7^{d3^{d+1}n}(2d+1)^ {3^{d+1}n} \max_{\Gamma}\Prb\left[|\Gamma|=n,\, N^-(\Gamma)\geq -Dn/\ln\delta_1\right]\,,
\end{align}
where the maximum is over all the cutsets $\Gamma$ from $A$ to infinity with $n$ edges.
For $\delta_1$ small enough and $D$ large enough, depending only on the dimension $d$, we have
\begin{align}\label{eq53}
\Prb\left[|\Gamma|=n,\, N^-(\Gamma)\geq -Dn/\ln\delta_1\right]\leq 2\exp(-Dn/2)\,.
\end{align}
We refer to the proof of Theorem 1 in \cite{Zhang2017} for the proof of this result. Thus, by taking $\delta_1$ small enough and $D$ large enough and combining \eqref{eq52} and \eqref{eq53}, there exist two constants $C_1$ and $C_2$ depending only on $G$, $d$ and $\delta_1$ such that
\begin{align}\label{eq54}
&\Prb[\exists E\subset \E^d, \text{ $E$ cuts $A$ from infinity, $N^- (E)\geq -(D/\ln\delta_1|E|$ and $|E|=n$}]\nonumber\\
&\hspace{0.5cm}\leq C_1\exp(-C_2 n)\,.
\end{align}
Finally, combining inequalities \eqref{star} and \eqref{eq54}, we obtain
\begin{align}\label{eq55}
&\Prb[\exists E\subset \E^d, \text{ $E$ cuts $A$ from infinity, $V(E)\leq V(\partial_e A)$ and $|E|=n$}]\nonumber\\
&\hspace{0.5cm} \leq \Prb\left[\begin{array}{c}\exists E\subset \E^d, \text{ $E$ cuts $A$ from infinity, $|E|=n$, }\\\text{$N^+(E)\leq \ep n$  and $ N^- (E)\leq -(Dn)/\ln\delta_1$ }\end{array}\right]+C_1\exp(-C_2n)\nonumber\\
&\hspace{1cm}+\exp\left(-\frac{1}{2}\theta \ep ^2 n\right)
\end{align}
We have controlled the numbers of $\ep^+$ edges and $\ep ^- $ edges in the cut. We have now to control the number of closed edges in the cut. We denote by $J$ the number of edges in $E$ of positive capacities. On the event $$\Big\{\,|E|=n, \, N^+(E)\leq \ep n, \,N^- (E)\leq -(Dn)/\ln\delta_1\,\Big\}\,,$$ we have
\begin{align}\label{star2}
J\leq N^+(E)+N^-(E)\leq \left(\ep -D/\ln\delta_1\right)n\,.
\end{align}
Thanks to inequalities \eqref{eq55} and \eqref{star2}, we obtain for $n \geq \lambda|\partial_e A|$,
\begin{align*}
&\Prb\left[\exists E\subset \E^d, \text{ $E$ cuts $A$ from infinity, $V(E)\leq V(\partial_e A)$ and $|E|=n$}\right]\\
&\hspace{0.5cm}\leq \Prb\left[\exists E\subset \E^d, \text{ $E$ cuts $A$ from infinity, $|E|=n$ and $J\leq \left(\ep -D/\ln\delta_1\right)n$}\right]\\
&\hspace{1cm}+C_1\exp(-C_2n)+\exp\left(-\frac{1}{2}\theta \ep ^2 n\right)\,.
\end{align*}
The remaining of the proof consists in controlling the zero edges. We will not write the details but only sketch the main ideas of the control. We say that an edge is closed if it has null capacity, otherwise we say that the edge is open.
Let us consider the set $\sC(A)$ that contains all the vertices that are connected to $A$ by an open path. On the event that there exists a cutset of null capacity that cuts $A$ to infinity, the set $\sC(A)$ is finite and its edge boundary $\partial_e\sC(A)$ is a cutset of null capacity. However, this cutset may be very tangled and may contain too many  edges. From this cutset, we want to build a "smoother" cutset, which has smaller cardinality. We use a renormalization procedure at a scale $t$ (which is defined later), and we exhibit a set of boxes $\Gamma_t$ that contains a cutset of null capacity and such that each box of $\Gamma_t$ has at least one $*$-neighbor in which an atypical event occurs (an event of probability that goes to $0$ when $t$ goes to infinity). As these events are atypical, it is unlikely that $\Gamma_t$ contains too many boxes. 

As we are in a supercritical Bernoulli percolation, \textit{i.e.}, $G(\{0\})<1-p_c(d)$, it is very unlikely that a cutset from $A$ to infinity has null capacity and that $\sC(A)$ is finite. To achieve the construction of $\Gamma_t$, we modify the configuration $\omega$. We first choose $\ep$ small enough such that $$J\leq \left(\ep -D/\ln\delta_1\right)n\leq \frac{n}{(2(36dt))^{3d}}\,.$$ For the edges $e_1,\dots,e_J$ in $E$ such that $t(e_i)>0$, we modify $\omega$ by setting $t(e_i)=0$ for $i\in\{1,\dots,J\}$. This modification of $\omega$ is only formal, it is a trick to build $\Gamma_t$. Later we will switch back the capacities to their original values, the boxes of $\Gamma_t$ that does not contain any $e_1,\dots,e_J$ remain unchanged, yet atypical events still occur in the vicinity of these boxes. The number of boxes in $\Gamma_t$ that have changed when we switch back to the configuration $\omega$ is bounded by the number of edges $J$ that we have closed. We obtain an upper bound on $|\Gamma_t|$ with the help of Peirls estimates on the number of boxes where an atypical event occurs. We finally control the probability that there exists a cutset of size $n$ with $J \leq n/(2(36dt))^{3d}$. These tricky computations are detailed in Zhang's paper \cite{Zhang2017}, so we do not reproduce them here. In the end, we obtain the following estimate: there exist constants $C''_1$ and $C''_2$ depending on $\ep$, $A$ and $G$ such that 
$$\Prb\left[\begin{array}{c}\exists E\subset \E^d, \text{ $E$ cuts $A$ from infinity,}\\\text{ $V(E)\leq V(\partial_e A)$ and $|E|=n$}\end{array}\right] \leq C'_1\exp(-C'_2n)\,.$$
By the Borel-Cantelli Lemma, we conclude that, for $n$ large enough, there does not exist any cutset $E$ from $A$ to infinity of size larger than $n$ and such that $V(E)\leq V(\partial_e A)$. Thus, there exists almost surely a minimal cutset from $A$ to infinity and for $n \geq\lambda |\partial_e A|$,
\begin{align*}
&\Prb[\exists E \subset \E^d,\text{ E is a minimal cutset from $A$ to infinity and $|E|\geq  n$}]\\
&\hspace{1cm}\leq \sum _{k= n}^ \infty\Prb\left[\begin{array}{c}\exists E\subset \E^d, \text{ $E$ cuts $A$ from infinity,}\\\text{ $V(E)\leq V(\partial_e A)$ and $|E|=k$}\end{array}\right]\leq C\exp(-C'n)
\end{align*}
where $C,C'$ are positive constants depending only on $G$, $A$ and $d$.

\section {Lower large deviations}\label{sectionLLD}

In this section we prove Theorem \ref{LLD}.
If $\phi_A=0$, we do not have to study the lower large deviations. We suppose that $\phi_A>0$. Let $\lambda<\phi_A$.
We denote by $\cE_n\subset \E^d_n$ a cutset from $A$ to infinity of minimal capacity, \textit{i.e.}, $V(\cE_n)=\mincut_n(A,\infty)$ and having minimal cardinality (if there is more than one such set we pick one according to a deterministic rule). The existence of such a cut is ensured by Theorem \ref{thmZhang}. The aim of this section is to bound from above the probability $\Prb[V(\cE_n)\leq \lambda n^{d-1}]$.


With high probability, the cut $\cE_n$ does not have too many edges. In the lattice $(\sZ_n^d,\E_n^d)$, the cardinality of $\partial_e A $ is of order $n^{d-1}$, and by applying Theorem \ref{thmZhang}, we obtain the existence of constants $\beta$, $C_1$ and $C_2$ depending on $A$, $G$ and $d$ such that
$$\Prb\left[|\cE_n|\geq \beta n ^{d-1}\right]\leq C_1\exp(-C_2 n ^{d-1})\,.$$
  In the proof, we will use the relative isoperimetric inequality in $\sR^d$. To do so, we define continuous versions of the discrete random sets. We define the set $\widetilde{E_n}\subset \sZ_n^d$ by
$$\widetilde{E_n}=\left\{x\in \sZ^d_n\setminus A_n\,:\,\begin{array}{c}
\text{there exists a path from $x$ to $A$ visiting}\\\text{ only edges that are not in $\cE_n$}\end{array} \right\}\,.$$
Let $C$ be the unit cube in $\sR^d$. We define a continuous version $E_n$ of $\widetilde{E_n}$ by
$$E_n=\bigcup_{x\in\widetilde{E_n}}\left(x+\frac{C}{n}\right)\setminus A.$$
If $|\cE_n|\leq \beta n^{d-1}$ then $\cP(E_n,\sR^d \setminus A)\leq \beta$ and $\cP(E_n)\leq \beta +\cP(A)$.
Moreover if $|\cE_n|\leq \beta n^{d-1}$ then $E_n\subset \cV(A, 2d\,\beta n^{d-2})$. 

The set $E_n$ is a random bounded subset of $\sR^d$. However, the diameter of $E_n$ might be very large, of polynomial order in $n$, and there is no compact region of $\sR^d$ that almost surely contains $E_n$. Therefore, we cannot proceed as in \cite{CerfTheret09infc}. However, as the capacity of $\cE_n$ is small, we expect it to remain close to the boundary of $\partial A$. As moving too far away from $\partial A$ is too expensive for $\cE_n$, we should observe unlikely events just by inspecting what happens near the boundary of $A$. 
Let $R$ be a real number we will choose later such that $A\subset B(0,R)$. We set $$\Omega=\overset{o}{B}(0,R)\cap A^c\,.$$
Note that the set $\Omega$ is open. In the following, we will only work with the portion of $\cE_n$ in $\Omega$.
For $F$ a Borel subset of $\sR^d$ such that $\cP(F,\sR)<\infty$, we define $$ \cI_\Omega(F)= \int_{\partial^* F \cap \Omega}\nu(n_F(x))d\cH^{d-1}(x)+\int_{\partial^* A \cap \partial^* (\Omega \setminus F)}\nu(n_A(x))d\cH^{d-1}(x).$$
The quantity $\cI_\Omega(F)$ may be interpreted as the capacity of the subset $F\cup A$ in $\Omega$.
By definition, we know that $\phi_A\leq \cI(A\cup F)$ but it is not easy to compare $\phi_A$ with $\cI_\Omega(F)$ because $\cI_\Omega(F)$ does not take into account the capacity of $\partial F\setminus \Omega$. In other words, the capacity in $\Omega\cup A$ does not coincide with the capacity in $\sR^d$. To go around this problem, we shall remove some regions of $F$ in the neighborhood of $\partial \Omega$, thereby obtaining a new set $\widetilde{F}$, whose closure is included in $\Omega$, and which therefore satisfies $ \cI(\widetilde{F})=\cI_\Omega(\widetilde{F})$. The delicate point is to build the set $\widetilde{F}$ in such a way that $\smash{\cI(\widetilde{F})}$ is only slightly larger than $\cI_\Omega(F)$. We will perform a geometrical surgery by choosing cutting surfaces which do not create too much extra perimeter.

We introduce the space 
$$\sC_\beta =\Big\{\,F\text{ Borel subset of } \Omega\,:\, \cP(F,\Omega)\leq \beta\,\Big\}$$
endowed with the topology $L^1$ associated to the distance $d(F,F')=\cL^d(F\Delta F')$, where $\Delta $ is the symmetric difference between sets. For this topology, the space $\sC_\beta$ is compact. Let us set $$\overline{E}_n=E_n\cap \Omega\,.$$ The set $\overline{E}_n$ belongs to $\sC_\beta$. Suppose that we associate to each $F\in\sC_\beta$ a positive number $\ep_F$. The collection of open sets 
$$\Big\{\,H\text{ Borel subset of } \Omega\,:\,\cL^d(H\Delta F)<\ep_F\,\Big\},\,F\in\sC_\beta,$$ is then an open covering of $\sC_\beta$. By compactness, we can extract a finite covering $(F_i,\ep_{F_i})_{1\leq i\leq N}$ of $\sC_\beta$. This compactness argument enables us to localize the random set $\overline{E}_n$ near a fixed set $F_i$ of $\sC_\beta$. The number $\ep_F$ associated to $F$ will depend on the set $F$. We will explain later in the proof how it is chosen. For the time being, we start the argument with a covering $(F,\ep_F)$ of $\sC_\beta$.
Let $\delta>0$ be a real number to be adjusted later.
To be able to operate the geometrical surgery, we will localize a region of $\Omega$ that contains a volume of $\overline{E}_n$ less than $\delta$.
As $A$ is compact, there exists a real number $\rho>0$ such that 
\begin{align}\label{corre3}
A\subset \overset{o}{B}(0,\rho)\text{   and   }\cL^d(B(0,\rho))\geq 3 c_{iso} (\cP(A)+\beta)^\frac{d}{d-1}\,.
\end{align}
Moreover, using Proposition \ref{isop}, we get
\begin{align}\label{contvol}
\cL^d(E_n)\leq \cL^d(E_n\cup A)\leq b_{iso}(\cP(A)+\beta)^{d/d-1}\,.
\end{align} 
Let us define for $i\geq 0$ the $i$-th annulus $\cA_i$:
$$\cA_i=B(0,\rho+i+1)\setminus B(0,\rho+i)\,.$$
We also define 
$$\textbf{i}=\min\Big\{\,i\geq 1\,:\, \cL^d(E_n\cap \cA_i)\leq \delta\,\Big\}\,.$$
We write $\textbf{i}$ in bold to emphasize that it is a random index. Thanks to inequality \eqref{contvol}, we obtain
$$\textbf{i}\leq b_{iso}(\cP(A)+\beta)^{d/d-1}/ \delta$$
and the minimum in the definition of $\textbf{i}$ is always attained. 
We set $$M= b_{iso}(\cP(A)+\beta)^{d/d-1}/ \delta\,$$
and $$R=\rho+1+M\,.$$ 
Thus, the region $\cA_{\textbf{i}}$ is included in $\Omega$ and contains a volume of $\overline{E}_n$ less than $\delta$.
We sum over $(F_i,\ep_{F_i})_{1\leq i\leq N}$ of $\sC_\beta$ and condition on $\textbf{i}$ and we get
\begin{align}\label{eqE}
\Prb&[V(\cE_n)\leq \lambda n^{d-1}]\nonumber\\
&\hspace{0.5cm}\leq \Prb[|\cE_n|\geq \beta n^{d-1}]+\Prb[V(\cE_n)\leq \lambda n^{d-1}, \, |\cE_n|\leq \beta n^{d-1}]\nonumber\\
&\hspace{0.5cm}\leq C_1\exp(-C_2\beta n^{d-1})+ \sum_{i=1}^N\sum_{1\leq j\leq M}  \Prb\left[\begin{array}{c}\cL^d(\overline{E}_n\Delta F_i)\leq \ep_{F_i},\\ V(\cE_n)\leq \lambda n^{d-1},\, \textbf{i}=j\end{array}\right]\,,
\end{align}
We control next the probability inside the sums for a generic $F$ in $\sC_\beta$ and for $j$ a value for the random set $\textbf{i}$ which occurs with positive probability. 
By definition of $\textbf{i}$, we have
$$\cL^d(E_n\cap \cA_j)\leq \delta\,.$$
\begin{figure}[H]
\def\svgwidth{0.7\textwidth}
\begin{center}
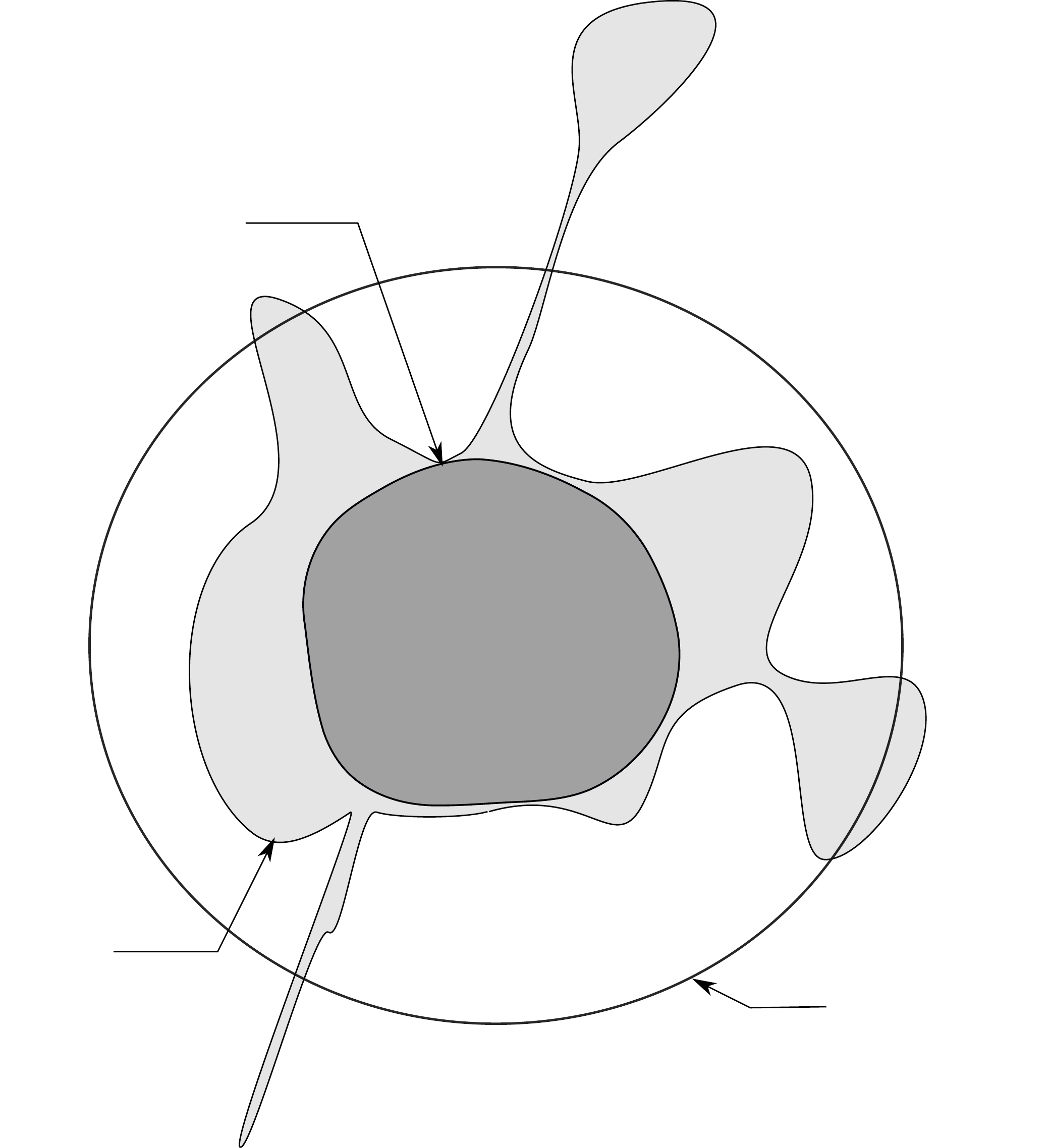
\caption[fig4]{\label{fig4}The set $E_n$ and its associated $\cA_{\textbf{i}}$}
\end{center}
\end{figure}
We want to build from $F$ a set $\widetilde{F}$ of finite perimeter such that its boundary $\partial \widetilde{F}\setminus A$ is in $\Omega$ and $\cI_\Omega (\widetilde{F})$ is close to $\cI_\Omega (F)$. Of course, cutting $F$ inside $\Omega$ creates some extra capacity along the cutting. The idea is to cut $F$ in the annulus $\cA_j$. As the volume of $F$ in this region is small, we shall be able to find cutting surfaces having small perimeter.
If we choose $\ep_F$ small enough such that $\ep_F\leq \delta$ for all $F\in\sC_\beta$, it follows that
$$\cL^d(F\cap \cA_j)\leq \delta +\ep_F\leq 2\delta,.$$
By Lemma 14.4 in \cite{Cerf:StFlour}, for $i\in J$, for $\cH^1$ almost all $t$ in $]0,1[$,
\begin{align}\label{eq1}
&\cI (F\cap B(0,\rho+j+t))\nonumber\\
&\hspace{1.5cm}\leq \cI (F\cap \overset{o}{B}(0,\rho+j+t))+ \nu_{max}\cH^{d-1}(F\cap \partial B(0,\rho+j+t))\,.
\end{align}
Let $T$ be the subset of $]0,1[$ for which the above inequality holds. We have $\cH^1 (T)=1$. Integrating in polar coordinates, we get
\begin{align*}
\int_T \cH^{d-1}(F\cap \partial B(0,\rho+j+t)) 
&= \cL ^d(F\cap B(0,\rho+j+1)\setminus B(0,\rho+j)) \\
&\leq\cL^d(F\cap \cA_j) \leq 2\delta\,.
\end{align*}
Thus, there exists $t\in T$ such that 
\begin{align}\label{eq3}
 \cH^{d-1}(F\cap \partial B(0,\rho+j+t)) \leq 3\delta\,. 
\end{align}
We next define $$\widetilde{F}= F\cap B(0,\rho+j+t)\,.$$
By construction, we have $\partial \widetilde{F} \setminus A \subset \Omega$. Combining inequalities \eqref{eq1} and \eqref{eq3}, we obtain
\begin{align}\label{star4}
\phi_A\leq \cI_\Omega (\widetilde{F}) \leq \cI\big(F,\overset{o}{B}(0,\rho+j+t)\big)+ \nu_{max} 3\delta\leq \cI_\Omega (F)+ 3\delta \nu_{max}
\end{align}
We show next that is possible to choose $\delta$ such that, uniformly over $F$, we have $s\cI_\Omega (F)\geq 3\delta \nu_{max}$.
We have 
\begin{align*}
\cI_\Omega(F)&\geq \int_{ \partial^* (F\cup A)\cap \overset{o}{B}(0,\rho)}\nu(n_{A\cup F}(x))d\cH^{d-1}(x)\geq \nu_{min}\cP\left(F\cup A, \overset{o}{B}(0,\rho)\right)\,.\end{align*}
We apply the isoperimetric inequality relative to the ball $B(0,\rho)$:
\begin{align*}
&\cP\left(F\cup A, \overset{o}{B}(0,\rho)\right)\\
&\hspace{1.5cm}\geq  \left(\frac{\min\left(\cL ^d((A\cup F) \cap B(0,\rho)),\,\cL^d((\sR^d\setminus (A\cup F))\cap B(0,\rho))\right)}{b_{iso}}\right) ^\frac{d-1}{d}\,.
\end{align*}
Since $F$ is in $\sC_\beta$, we have $\cL^d(A\cup F)\leq c_{iso}(\cP(A)+\beta)^\frac{d}{d-1}$.
Together with inequality \eqref{corre3}, we conclude that 
$$\cP\left(F\cup A, \overset{o}{B}(0,\rho)\right)\geq  \left(\frac{\cL ^d(A)}{b_{iso}}\right) ^\frac{d-1}{d}\,.$$
There exists $s>0$ such that $\lambda\leq (1-s)\phi_A$. We choose $\delta$ such that 
\begin{align}\label{choicedelta}
2\delta \nu_{max}=s \nu_{min} \left(\frac{\cL ^d(A)}{b_{iso}}\right) ^\frac{d-1}{d}\,.
\end{align} 
Using inequality \eqref{star4}, we have then, for any $F$ in $\sC_\beta$,
$$ \cL^d(\overline{E}_n\Delta F)\leq \delta\implies s\cI_{\Omega}(F)\geq 3\delta \nu_{max}\implies \lambda \leq (1-s)\phi_A\leq (1-s^2)\cI_\Omega (F)\,.$$
So we get, 
\begin{align}\label{eqF}
&\Prb[V(\cE_n)\leq \lambda n^{d-1},\,\cL^d(\overline{E}_n\Delta F)\leq \ep_{F},\, \textbf{i}=j]\nonumber\\
&\hspace{1cm}\leq  \Prb[V(\cE_n\cap \Omega)\leq (1-s^2)\cI_\Omega(F) n^{d-1},\,\cL^d(\overline{E}_n\Delta F)\leq \ep_{F}\}]\,.
\end{align}
The remaining of the proof follows the same ideas as in \cite{CerfTheret09infc}. We study the quantity $$\Prb[V(\cE_n\cap \Omega)\leq (1-s^2)\cI_\Omega(F) n^{d-1},\,\cL^d(\overline{E}_n\Delta F)\leq \ep_{F}\}]$$ for a generic $F$ in $\sC_\beta$ and its corresponding $\ep_F$. We will need the following lemma to cover $F$ by balls of small radius such that $\partial F$ is "almost flat" in each ball. This lemma is purely geometric, the covering depends only on the set $F$.

\begin{lem}\label{lemball} [Lemma 1 in \cite{CerfTheret09infc}]
Let $F$ be a subset of $\Omega$ of finite perimeter such that $\partial F\cap \partial \Omega=\emptyset$. For every positive constants $\delta'$ and $\eta'$, there exists a finite family of closed disjoint balls $(B(x_i,\rho_i))_{i\in I \cup K}$ and vectors $(v_i)_{i\in I \cup K}$, such that, 
$$\forall i\in I, \, x_i\in \partial^* F \cap \Omega, \, \rho_i\in ]0,1[, \, B_i\subset \Omega\setminus A,\, \cL^d((F\cap B_i)\Delta B_i^-)\leq \delta' \alpha_d \rho_i^d,$$
and letting $B_i=B(x_i,\rho_i)$ and $B_i^-=B^-(x_i,\rho_i,v_i)$, we have
\begin{align*}\forall i\in K, \, x_i\in \partial^* A \cap \partial^*(\Omega\setminus F), \, \rho_i\in ]0,1[, \, \partial \Omega\cap B_i& \subset \partial ^* A\setminus \partial^* F,\, &\\ \cL^d((A\cap B_i)\Delta B_i^-)\leq \delta' \alpha_d \rho_i^d,\end{align*}
and finally 
$$\left|\cI_\Omega(F)- \sum_{i\in I} \alpha_{d-1}\rho_i^{d-1}(\nu(n_F(x_i))-\sum_{i\in K} \alpha_{d-1}\rho_i^{d-1}(\nu(n_A(x_i))\right |\leq \eta. $$
\end{lem}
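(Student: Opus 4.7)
The statement is a classical Vitali-type geometric lemma for sets of finite perimeter. The approach I would take is to combine the De Giorgi structure theorem with a Besicovitch covering argument, applied separately to the two reduced-boundary pieces appearing in the definition of $\cI_\Omega(F)$, so as to replace each of the two integrals by a Riemann-like sum over disjoint balls on which the relevant set ($F$ or $A$) is, up to volume $\delta'\alpha_d\rho^d$, a half-ball.

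The first step is to identify the good points. By the De Giorgi theorem, at $\cH^{d-1}$-a.e.\ $x\in\partial^*F\cap\Omega$ the rescaled sets $(F-x)/r$ converge in $L^1_{\text{loc}}$ to the half-space with outer normal $n_F(x)$, and the spherical density $\cH^{d-1}(\partial^*F\cap B(x,r))/(\alpha_{d-1}r^{d-1})$ tends to $1$ as $r\to 0$. The analogous facts hold for $A$ at $\cH^{d-1}$-a.e.\ point of $\partial^*A\cap\partial^*(\Omega\setminus F)$; in fact, membership of such an $x$ in $\partial^*(\Omega\setminus F)$ forces $F$ to have zero Lebesgue density at $x$, so locally near $x$ the piece $\partial^*A\cap\partial^*(\Omega\setminus F)$ coincides (up to $\cH^{d-1}$-null sets) with $\partial^*A$ itself. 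For the fixed tolerance $\delta'$, this provides each good point $x$ with a threshold radius $\rho_0(x)>0$ below which the half-ball volume bound of the lemma holds and the spherical density is within a small fixed margin of $1$. Using $\partial F\cap\partial\Omega=\emptyset$ together with the compactness of $\partial F$ (valid because $F\subset\Omega$ is bounded), one gets a positive separation $d(\partial F,\partial\Omega)>0$; further shrinking $\rho_0(x)$, I enforce $B(x,\rho_0(x))\subset\Omega$ when $x$ lies in the first boundary piece, and $B(x,\rho_0(x))\cap\partial\Omega\subset\partial A\setminus\partial F\subset\partial^*A\setminus\partial^*F$ when $x$ lies in the second (also demanding that the radius be smaller than $d(x,\partial B(0,R))$ in the latter case).

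The families $\{B(x,r):0<r<\rho_0(x)\}$ then form Vitali fine covers of the two boundary pieces. The Besicovitch covering theorem extracts a countable disjoint subfamily covering $\cH^{d-1}$-almost all of $\partial^*F\cap\Omega$, and then, independently, a second such subfamily for $\partial^*A\cap\partial^*(\Omega\setminus F)$ whose balls are further required to be disjoint from every ball of the first family; this is legitimate because the first family sits at positive distance from $\partial A$. Since $\cP(F,\Omega)<\infty$ and $\cH^{d-1}(\partial^*A\cap\partial^*(\Omega\setminus F))<\infty$, one truncates each countable family to a finite one, losing at most $\eta'/(4\nu_{max})$ of $\cH^{d-1}$-mass per side. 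On each retained ball $B_i=B(x_i,\rho_i)$, the continuity of $\nu$ on the sphere and the density estimate $\cH^{d-1}(\partial^*F\cap B_i)=\alpha_{d-1}\rho_i^{d-1}(1+o(1))$ yield
$$\Big|\int_{\partial^*F\cap B_i}\nu(n_F)\,d\cH^{d-1}-\alpha_{d-1}\rho_i^{d-1}\nu(n_F(x_i))\Big|\leq \eta''\alpha_{d-1}\rho_i^{d-1},$$
with $\eta''$ as small as desired by shrinking $\rho_0$; the analogous inequality holds for $i\in K$ with $A$ and $n_A$. Summing over the finite disjoint subfamilies $I$ and $K$ and adding the $\cH^{d-1}$-mass discarded by truncation gives the announced global bound $\eta'$.

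The main obstacle is coordinating the three geometric requirements on the two families: the $I$-balls must sit strictly inside $\Omega\setminus A$, the $K$-balls must meet $\partial\Omega$ only within $\partial^*A\setminus\partial^*F$, and the two families must be mutually disjoint. All three are unlocked by the hypothesis $\partial F\cap\partial\Omega=\emptyset$: the resulting uniform separation between $\partial F$ and $\partial A\cup\partial B(0,R)$ lets one shrink the threshold radii near $\partial A$ so that the $K$-balls meet $\partial\Omega$ only along $\partial A\setminus\partial F$, and lets one keep the $I$-balls away from $\partial A$, so that the two covering procedures can be carried out without interference.
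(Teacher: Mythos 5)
The paper itself offers no proof of this lemma: it is quoted verbatim as Lemma 1 of \cite{CerfTheret09infc} and used as a black box. Your strategy --- De Giorgi's structure theorem to select good points where the blow-up is a half-space and the spherical density of the reduced boundary is close to $\alpha_{d-1}r^{d-1}$, a Vitali/Besicovitch extraction of disjoint balls for each of the two boundary pieces, truncation to finite subfamilies using $\cP(F,\Omega)<\infty$, and the continuity of $\nu$ to convert the two surface integrals into the Riemann-type sums --- is exactly the argument used in that reference, so in substance you have reconstructed the intended proof.

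Two points in your write-up deserve a warning. First, the chain $B(x,\rho_0(x))\cap\partial\Omega\subset\partial A\setminus\partial F\subset\partial^*A\setminus\partial^*F$ uses the inclusion $\partial A\subset\partial^*A$, which is backwards: the reduced boundary is a subset of the topological boundary, not the reverse. The part $\partial\Omega\cap B_i\cap\partial^*F=\emptyset$ is indeed free from $\partial F\cap\partial\Omega=\emptyset$, but $\partial A\cap B_i\subset\partial^*A$ is a genuine constraint on the ball: it requires that \emph{every} topological boundary point of $A$ inside $B_i$ be a reduced boundary point, and for a general convex body the singular (non-reduced) points, while $\cH^{d-1}$-negligible, need not be avoidable by small balls. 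This is the one step where the geometry of $A$ (and of $\partial\Omega$) must actually enter, and your proof passes over it with a false inclusion. Second, you impose disjointness of the $K$-balls from the $I$-balls while the $I$-family is still countable, justifying it by saying the $I$-family "sits at positive distance from $\partial A$"; a countable union of closed balls contained in the open set $\Omega$ need not be at positive distance from $\partial\Omega$. The standard fix is to truncate the $I$-family to a finite subfamily first (losing an $\cH^{d-1}$-mass controlled by $\eta'$), note that each $K$-center $x\in\partial\Omega$ lies outside the resulting compact union, and only then shrink $\rho_0(x)$ and run the second covering. Both issues are repairable, but the first one is a real gap as written.
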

\noindent First notice that $$\phi_A\leq \int _{\partial^*A}\nu(n_A(x))d\cH^{d-1}(x)<\infty\,.$$ We choose $\eta = s^4 \cI_\Omega(F)$ and $\delta'>0$ will be chosen later. Let $(B_i)_{i\in I\cup K}$ be a family as in Lemma \ref{lemball}, we obtain
$$\cI_\Omega (F)\leq \frac{1}{1- s^4}\left(\sum_{i\in I} \alpha_{d-1}\rho_i^{d-1}(\nu(n_F(x_i))+\sum_{i\in K} \alpha_{d-1}\rho_i^{d-1}(\nu(n_A(x_i))\right)$$ 
whence, setting $w=s^2/(1+s^2)$,
$$(1-s^2)\cI_\Omega (F)\leq (1- w)\left(\sum_{i\in I} \alpha_{d-1}\rho_i^{d-1}(\nu(n_F(x_i))+\sum_{i\in K} \alpha_{d-1}\rho_i^{d-1}(\nu(n_A(x_i))\right)\,.$$  
Since the balls $(B_i)_{i\in I\cup K}$ are pairwise disjoint, we have
$$V(\cE_n\cap \Omega)\geq \sum_{i\in I \cup K} V(\cE_n\cap B_i)\,.$$
It follows that
\begin{align}\label{eqGint}
&\Prb[V(\cE_n\cap \Omega)\leq (1-s^2)\cI_\Omega(F) n^{d-1},\,\cL^d(\overline{E}_n\Delta F)\leq \ep_{F}\}]\nonumber\\
&\hspace{0.5cm}\leq \Prb\left[ \begin{array}{c}
\sum_{i\in I \cup K} V(\cE_n\cap B_i)\leq (1- w)n^{d-1}\Big(\sum_{i\in I} \alpha_{d-1}\rho_i^{d-1}(\nu(n_F(x_i))\\ +\sum_{i\in K} \alpha_{d-1}\rho_i^{d-1}(\nu(n_A(x_i))\Big) \text{ and }\cL^d(\overline{E}_n\Delta F)\leq \ep_{F}\end{array}\right]\,.
\end{align}
We now choose 
\begin{align}\label{choiceep}
\ep_F\leq\min_{i\in I\cup K}\alpha_d \rho_i^d\delta'
\end{align}
We wish to control $\card ((E_n\cap B_i)\Delta B_i^-)\cap \sZ_n ^d )$, it is equivalent to evaluate $$n^d\cL^d((E_n\cap B_i)\Delta B_i^-)\cap \sZ_n ^d+[-1/2n,1/2n]^d)\,.$$ This was done in \cite{CerfTheret09infc}. We will not redo the computations here, but only state the results:
for $n$ large enough, for $i\in I$,
 $$\card ((E_n\cap B_i)\Delta B_i^-)\cap \sZ_n ^d )\leq 4\delta' \alpha_d \rho_i^d n^d\,.$$
 We recall that $\widetilde{E}_n=E_n\cap \sZ_n^d$. We define $$\widetilde{E}'_n=\widetilde{E}_n\cup (A\cap\sZ_n^d)\text{ and }E'_n= \widetilde{E}'_n+[-1/(2n),1/(2n)]^d\,.$$ For $n$ large enough, for $i\in K$, it was proven in section 5.2 in \cite{CerfTheret09infc} that
  $$\card ((E'_n\cap B_i)\Delta B_i^+)\cap \sZ_n ^d )\leq 4\delta' \alpha_d \rho_i^d n^d\,.$$
Thus, for $n$ large enough, thanks to inequality \eqref{eqGint},
\begin{align}\label{eqG}
&\Prb[V(\cE_n)\leq \lambda n^{d-1}, \,\cL^d (\overline{E}_n\Delta F)\leq \ep_F]\nonumber\\
&\hspace{0.5cm} \leq\sum_{i\in I}\Prb\left[\begin{array}{c} V(\partial_e \widetilde{E}_n\cap B_i)\leq (1-w) \alpha_{d-1}\rho_i^{d-1}(\nu(n_F(x_i))n ^{d-1}\text{,}\\ \card ((\widetilde{E}_n\cap B_i)\Delta (B_i^-\cap \sZ_n^d))\leq 4\delta' \alpha_d \rho_i^d n^d\end{array}\right]\nonumber\\
&\hspace{1cm}+\sum_{i\in K}\Prb\left[\begin{array}{c} V(\partial_e \widetilde{E'}_n\cap B_i)\leq (1-w) \alpha_{d-1}\rho_i^{d-1}(\nu(n_A(x_i))n ^{d-1}\text{,}\\ \card ((\widetilde{E'}_n\cap B_i)\Delta (B_i^-\cap \sZ_n^d))\leq 4\delta' \alpha_d \rho_i^d n^d\end{array}\right]\nonumber\\
&\hspace{0.5cm} \leq\sum_{i\in I\cup K}\Prb[G(x_i,\rho_i,v_i,w,\delta')]\,,
\end{align}
 where $G(x,r,v,w,\delta')$ is the event that there exists a set $U\subset B(x,r)\cap \sZ_n^d$ such that:
 $$\card (U\Delta B^-(x,r,v))\leq  4\delta' \alpha_d r^d n^d$$
 and
 $$V(\partial_e U\cap B(x,r) )\leq (1-w) \alpha_{d-1}r^{d-1}(\nu(n_F(x))n ^{d-1}\,.$$
This event depends only on the edges inside $B(x,r,v)$. This event is a rare event. Indeed, if this event occurs, we can show that the maximal flow from the upper half part of $B(x,r,v)$ (upper half part according to the direction $v$) and the lower half part is abnormally small. To do so, we build from the set $U$ an almost flat cutset in the ball. The fact that $\card (U\Delta B^-(x,r,v))$ is small implies that $\partial_ e U$ is almost flat and is close to $\disc(x,r,v)$. However, this does not prevent the existence of long thin strands that might escape the ball and prevent $U$ from being a cutset in the ball. The idea is to cut these strands by adding edges at a fixed height. We have to choose the appropriate height to ensure that the extra edges we need to add to cut these strands are not too many, so that we can control their capacity. The new set of edges we create by adding to $U$ these edges will be in a sense a cutset. The last thing to do is then to cover  $\disc(x,r,v)$ by hyperrectangles in order to use the estimate that the flow is abnormally small in a cylinder. This work was done in section 6 in \cite{CerfTheret09infc}. It is possible to choose $\delta'$ depending on $F$, $G$ and $w$ such that there exist positive constants $C^F_{1,k}$ and $C^F_{2,k}$ depending on $G$, $d$, $F$, $k$ and $w$ so that for all $k\in I\cup K$,
$$\Prb[G(x_k,\rho_k,v_k,w,\delta')]\leq C^F_{1,k}\exp(-C^F_{2,k}n^{d-1})\,.$$
Using inequality \eqref{eqG}, we obtain 
\begin{align}\label{eqG'}
\Prb[V(\cE_n)\leq \lambda n^{d-1}, \,\cL^d (\overline{E}_n\Delta F)\leq \ep_F]& \leq\sum_{k\in I\cup K} C^F_{1,k}\exp(-C^F_{2,k}n^{d-1})\,.
\end{align}
\noindent Combining inequalities \eqref{eqE}, \eqref{eqF} and \eqref{eqG'}, we obtain that, for small enough $\delta'$,
\begin{align*}
&\Prb[V(\cE_n)\leq \lambda n^{d-1}]\\
&\hspace{0.2cm}\leq C_1\exp(-C_2\beta n^{d-1}) + \sum_{j=1}^N\sum_{i=1}^M \,\,\sum_{k\in I^{F_j}\cup K^{F_j}} C^{F_j}_{1,k}\exp(-C^{F_j}_{2,k}n^{d-1})\\
&\hspace{0.2cm}\leq  C_1\exp(-C_2\beta n^{d-1})+M\sum_{j=1}^N\,\,\sum_{k\in I^{F_j}\cup K^{F_j}} C^{F_j}_{1,k}\exp(-C^{F_j}_{2,k}n^{d-1})\,.
\end{align*}
As $M$, $N$, $|I^{F_j}|$ and $|K^{F_j}|$, for $1\leq j\leq N$, are finite and independent of $n$, we obtain the expected result and this proves Theorem \ref{LLD}.

To conclude, let us sum up the order in which the constants are chosen. We first choose $\delta$ such that it satisfies equality \eqref{choicedelta}. Next, we choose $\delta'$ depending on $\lambda$ and $G$. The parameter $\delta'$ has to satisfy some inequalities that we do not detail here, we refer to section 7 in \cite{CerfTheret09infc}. Finally, to each $F$ in $\sC_\beta$, we choose $\ep_F$ such that it satifies both $\ep_{F_i}\leq \delta$ and inequality \eqref{choiceep}.
\section{Identification of $\varphi_A$}\label{sectionconclusion}
In this section, we prove Proposition \ref{prp}, the last ingredient needed to prove Theorem \ref{thm1}.
\begin{figure}[H]
\def\svgwidth{0.7\textwidth}
\begin{center}
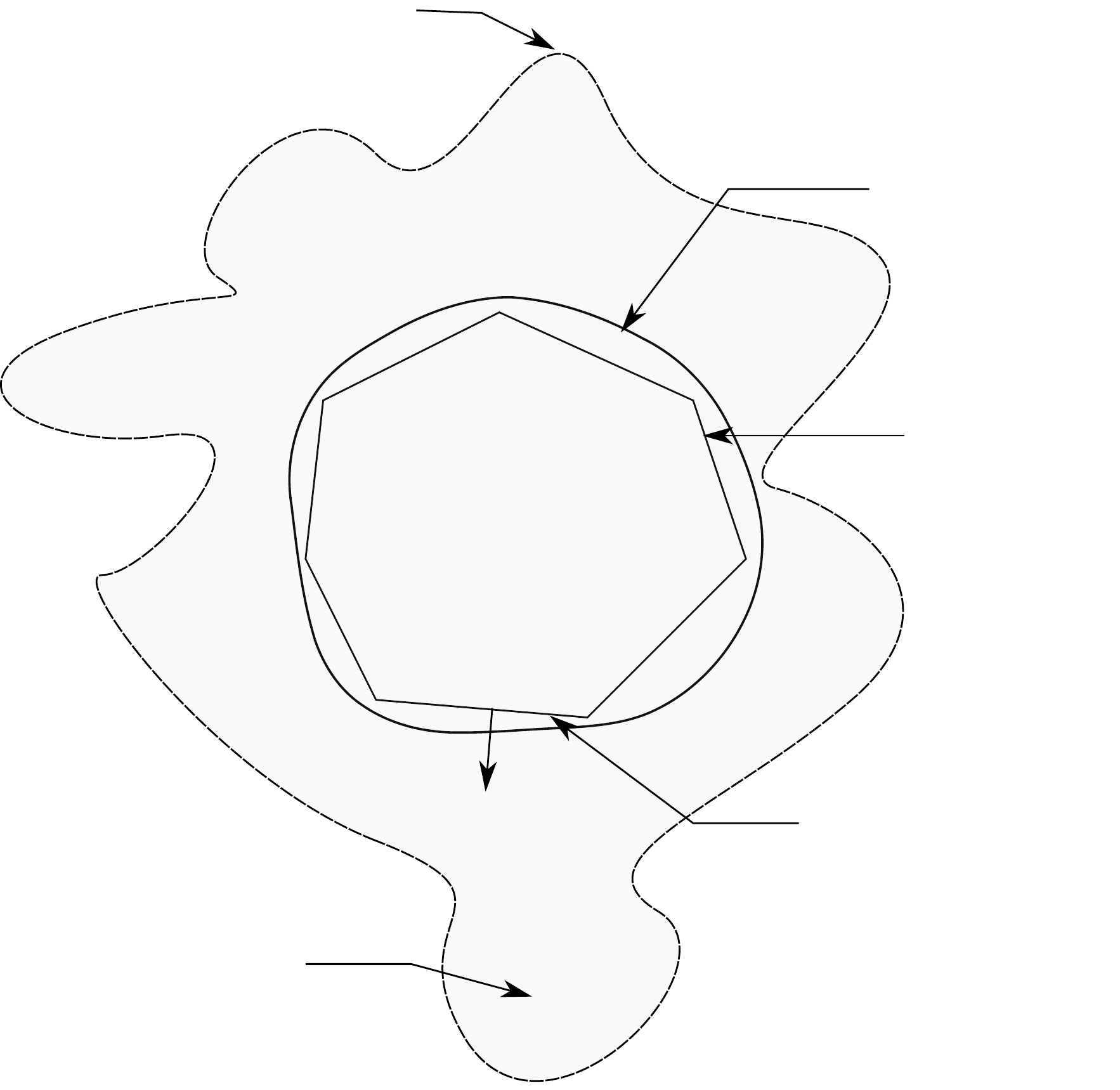
\caption[fig5]{\label{fig5}Construction of $S_1$ for a bounded set $S$ such that $A\subset S$}
\end{center}
\end{figure}
\begin{proof}[Proof of Proposition \ref{prp}]
Let $A$ be a compact convex subset of $\sR^d$. We shall show that any bounded set $S$ that contains $A$ satisfies $\cI(A)\leq \cI(S)$. Let $S$ be such a set, we can assume that $S$ has finite perimeter otherwise the inequality is trivial. Let $\ep>0$. As $A$ is convex, by Lemma \ref{ApproP}, there exists a convex polytope $P$ such that $P\subset A$ and $\cI(A)\leq \cI(P)+\ep$. There exist $m\in\sN^*$, $v_1,\dots,v_m$ unit vectors and $\varphi_1,\dots,\varphi_m$ real numbers such that
$$P=\bigcap_{1\leq i\leq m}\Big\{\,x\in \sR^d  : \, x\cdot v_i\leq \varphi_i\,\Big\}\,.$$
We denote by  $H_i$ the hyperplane associated with $v_i$, \textit{i.e.}, $$H_i= \Big\{\,x\in \sR^d  : \, x\cdot v_i= \varphi_i\,\Big\}\,$$
and $H_i^-$ the associated half-space containing $P$, \textit{i.e},
$$H_i^-= \Big\{\,x\in \sR^d  : \, x\cdot v_i\leq \varphi_i\,\Big\}\,.$$
We shall successively chop off portions from S thereby reducing its surface energy by using the family of half-spaces $(H_i^-,\,1\leq i \leq m)$.
We define by induction this sequence of sets. We set $S_0=S$. Let us assume $S_i$ is already defined for some $i<m$, we set $$S_{i+1}=S_i\cap H_{i+1}^-\,.$$
We next show that $\cI(S_i)\geq \cI(S_{i+1})$ for all $0\leq i< m$. We shall apply the Gauss-Green theorem to each $S_i$ in order to compare the capacity of the face $ H_{i+1}\cap S_i  $ with the capacity of $F_i=\partial S_i\setminus H_{i+1}^-$ (see figure \ref{fig5}). For $i\in\{0,\dots,m-1\}$, let $y_{i+1}\in\cW_\nu$ such that $y_{i+1}$ achieves the supremum in 
$$\sup_{x\in\cW_\nu}\big\{\,x\cdot v_{i+1}\,\big\}\,.$$
 There exists a $\sC ^1$ vector field $f_{i+1}:\sR^d\rightarrow \cW_\nu$ having compact support such that $f_{i+1}(x)=y_{i+1}\in\cW_\nu$ on $\cV(S_i,1)$. We recall that $S_i$ is bounded and we do not go into the details of the existence of such a vector field. Applying Theorem \ref{GGthm} to $S_i\setminus S_{i+1}$  and $f_i$, we obtain
$$\int_{S_i\setminus S_{i+1}} \dive f_{i+1}(x)d\cL^d(x)=\int _{\partial^* (S_i\setminus S_{i+1})}f_{i+1}(x)\cdot n_{S_i\setminus S_{i+1}}(x)d\cH^{d-1}(x)\,.$$
Using Proposition 14.1 in \cite{Cerf:StFlour}, we have $y_{i+1}\cdot v_{i+1}=\nu(v_{i+1})$ and so 
\begin{align*}
\int _{H_{i+1} \cap S_i }y_{i+1}\cdot (-v_{i+1})d\cH^{d-1}(x)=-\nu(v_{i+1})\cH^{d-1}(  H_{i+1}\cap S_i)\,.
\end{align*}
As $f_{i+1}$ is constant on $S_i\setminus S_{i+1}$, we get
\begin{align*}
0=\int_{F_i} f_{i+1}(x)\cdot n_{S}(x)d\cH^{d-1}(x)
-\nu(v_{i+1})\cH^{d-1}(  H_{i+1}\cap S_i)\,,
\end{align*}
and therefore
\begin{align*}
\nu(v_{i+1})\cH^{d-1}(S_i\cap H_{i+1})&=\int_{F_i} f_{i+1}(x)\cdot n_{S}(x)d\cH^{d-1}(x)\\
&\leq  \int_{F_i}  \nu(n_S(x))d\cH^{d-1}(x)\,.
\end{align*}
The last inequality comes from the fact that $f_{i+1}(x)\in\cW_\nu$, therefore we have $$f_i(x)\cdot u\leq \nu(u)$$ for any $u$ in $\sR^d$. 
Finally we obtain as $P\subset A\subset S$ that $S_m=P$ and so $\cI(P)\leq \cI(S)$.
\begin{align*}
\cI(A)\leq \cI(P)+\ep \leq  \cI(S)+\ep\,.
\end{align*}
As this inequality is true for any $\ep>0$, we conclude that $\cI(A)\leq \cI(S)$ and the result follows.
\end{proof}
\noindent Combining Theorem \ref{ULD}, Theorem \ref{LLD} and Proposition \ref{prp}, we obtain Theorem \ref{thm1}.
\paragraph{}
\noindent \textbf{Acknowledgments.} I thank my advisor Marie Th{\'e}ret for presenting me this problem. I also wish to express my gratitude to Rapha{\"e}l Cerf who has given me the opportunity to work with him for an internship. I thank him for our stimulating discussions and for always finding time to talk. This research was partially supported by the ANR project PPPP (ANR-16-CE40-0016).

\bibliographystyle{plain}
\bibliography{biblio}

\def\cprime{$'$}
\begin{thebibliography}{10}

\bibitem{Bollobas}
B{\'e}la Bollob{\'a}s.
\newblock {\em Graph theory}, volume~63 of {\em Graduate Texts in Mathematics}.
\newblock Springer-Verlag, New York, 1979.
\newblock An introductory course.

\bibitem{Cerf:StFlour}
Rapha{\"e}l Cerf.
\newblock The {W}ulff crystal in {I}sing and percolation models.
\newblock In {\em \'Ecole d'\'Et\'e de Probabilit\'es de Saint Flour}, number
  1878 in Lecture Notes in Mathematics. Springer-Verlag, 2006.

\bibitem{cerf2000}
Rapha{\"e}l Cerf and Ágoston Pisztora.
\newblock On the wulff crystal in the ising model.
\newblock {\em Ann. Probab.}, 28(3):947--1017, 06 2000.

\bibitem{CerfTheret09geoc}
Rapha{\"e}l Cerf and Marie Th{\'e}ret.
\newblock Law of large numbers for the maximal flow through a domain of {$\Bbb
  R^d$} in first passage percolation.
\newblock {\em Trans. Amer. Math. Soc.}, 363(7):3665--3702, 2011.

\bibitem{CerfTheret09infc}
Rapha{\"e}l Cerf and Marie Th{\'e}ret.
\newblock Lower large deviations for the maximal flow through a domain of
  $\mathbb{R}^d$ in first passage percolation.
\newblock {\em Probability Theory and Related Fields}, 150:635--661, 2011.

\bibitem{Garet2}
O.~Garet.
\newblock Capacitive flows on a $2d$ random net.
\newblock {\em Annals of Applied Probability}, 19(2):641--660, 2009.

\bibitem{GrimmettKesten84}
G.~Grimmett and H.~Kesten.
\newblock First-passage percolation, network flows and electrical resistances.
\newblock {\em Z. Wahrsch. Verw. Gebiete}, 66(3):335--366, 1984.

\bibitem{grimmettt:percolation}
Geoffrey Grimmett.
\newblock {\em Percolation}.
\newblock Springer-Verlag, 1989.

\bibitem{HammersleyWelsh}
J.~M. Hammersley and D.~J.~A. Welsh.
\newblock First-passage percolation, subadditive processes, stochastic
  networks, and generalized renewal theory.
\newblock In {\em Proc. {I}nternat. {R}es. {S}emin., {S}tatist. {L}ab., {U}niv.
  {C}alifornia, {B}erkeley, {C}alif}, pages 61--110. Springer-Verlag, New York,
  1965.

\bibitem{Kesten:flows}
Harry Kesten.
\newblock Surfaces with minimal random weights and maximal flows: a higher
  dimensional version of first-passage percolation.
\newblock {\em Illinois Journal of Mathematics}, 31(1):99--166, 1987.

\bibitem{Rossignol2010}
R.~{Rossignol} and M.~{Th{\'e}ret}.
\newblock Lower large deviations and laws of large numbers for maximal flows
  through a box in first passage percolation.
\newblock {\em Annales de l'I.H.P. Probabilit{\'e}s et statistiques},
  46(4):1093--1131, 2010.

\bibitem{flowconstant}
R.~{Rossignol} and M.~{Th{\'e}ret}.
\newblock {Existence and continuity of the flow constant in first passage
  percolation}.
\newblock {\em ArXiv e-prints}, July 2017.

\bibitem{TheretUpperTau14}
Marie Th{\'e}ret.
\newblock Upper large deviations for maximal flows through a tilted cylinder.
\newblock {\em ESAIM Probab. Stat.}, 18:117--129, 2014.

\bibitem{Zhang2017}
Yu~Zhang.
\newblock Limit theorems for maximum flows on a lattice.
\newblock {\em Probability Theory and Related Fields}, May 2017.

\end{thebibliography}

\end{document}